 \title[Local wellposedness for 3-d periodic NLS] %Use the shortened version of the full title
 {Local wellposedness for the critical nonlinear Schr\"odinger equation on $\mathbb{T}^3$}
\author[Gyu Eun Lee]{}
\thanks{This work was supported by NSF grants DMS-1265868, DMS-1600942 (principal investigator: Rowan Killip) and DMS-1500707 (principal investigator: Monica Vi\c{s}an).}
\begin{document}

\maketitle

% Enter the first author's name and address:
\centerline{\scshape Gyu Eun Lee}
%\medskip
%{\footnotesize
% please put the address of the first author
% \centerline{Department of Mathematics, University of California, Los Angeles}
%   \centerline{ Los Angeles, CA 90095, USA}
%} % Do not forget to end the {\footnotesize by the sign }

\bigskip

\begin{abstract}
    For $p\geq 2$, we prove local wellposedness for the nonlinear Schr\"odinger equation $(i\pt_t + \Delta)u = \pm|u|^pu$ on $\mathbb{T}^3$ with initial data in $H^{s_c}(\mathbb{T}^3)$, where $\mathbb{T}^3$ is a rectangular irrational $3$-torus and $s_c = \frac{3}{2} - \frac{2}{p}$ is the scaling-critical regularity.
    This extends work of earlier authors on the local Cauchy theory for NLS on $\mathbb{T}^3$ with power nonlinearities where $p$ is an even integer.
\end{abstract}

\section{Introduction}

In this paper we are concerned with the local theory for the Cauchy problem for the nonlinear Schr\"odinger equation (NLS) on a rectangular irrational $3$-torus $\bb{T}^3$.
For notational simplicity, we regard $\bb{T}^3$ as the set $\bb{R}^3/\bb{Z}^3$, and embed the irrationality of the torus into our choice of Riemannian metric.
We denote the corresponding Laplace-Beltrami operator by $\Delta$; see \citelist{\cite{GuOhWa13} \cite{KiVi16}}.
Under these conventions, the equation takes the form
\begin{equation}\label{eqn:NLS}
    \begin{cases}
        (i\pt_t + \Delta)u \pm |u|^p u = 0, & (t,x)\in\bb{R}\times\bb{T}^3,\\
        u(0,x) = u_0(x) \in H^{s_c}(\bb{T}^3), & s_c = \frac{3}{2} - \frac{2}{p}.
    \end{cases}
\end{equation}
We restrict our attention to nonlinearities with $p\geq 2$.
This includes the nonlinearities that are most relevant for physical problems, namely $p=2$ (cubic NLS) and $p=4$ (quintic NLS).
We consider this problem to be posed at critical regularity by analogy with the Euclidean case, for which the $\dot{H}^{s_c}(\bb{R}^3)$ norm is invariant under the NLS scaling symmetry $u_\lambda(t,x) = \lambda^{-2/p} u(\lambda^{-2}t,\lambda^{-1}x)$.
The choice of $+$ sign in \eqref{eqn:NLS} is called \ita{focusing}, and the $-$ sign is called \ita{defocusing}.
The focusing or defocusing nature of the equation typically has important implications in the global Cauchy theory of equations of this type; however, in this paper we are concerned only with local theory.

Our main result is:

\begin{main}[Local wellposedness]\label{theorem:main}
    Fix $p\geq 2$.
    Let $u_0\in H^{s_c}(\bb{T}^3)$.
    Then there exists a time of existence $T = T(u_0)$ and a unique solution $u\in C_t([0,T);H^{s_c}(\bb{T}^3))\cap X^{s_c}([0,T))$ to \eqref{eqn:NLS}.
\end{main}
Here $X^s$ refers to the adapted function spaces developed by Koch, Tataru, and collaborators \citelist{\cite{HeTaTz11} \cite{KoTaVi14}}.

Equation \eqref{eqn:NLS} is the $3$-dimensional case of the general periodic $H^s$-critical NLS with power nonlinearity:
\begin{equation}\label{eqn:NLS_general}
    \begin{cases}
        (i\pt_t + \Delta)u \pm |u|^p u = 0, & (t,x)\in\bb{R}\times\bb{T}^d,\\
        u(0,x) = u_0(x) \in H^{s_c}(\bb{T}^d), & s_c = \frac{d}{2} - \frac{2}{p}.
    \end{cases}
\end{equation}
Theorem \ref{theorem:main} extends earlier results on the Cauchy theory of Equation \eqref{eqn:NLS_general}.
Work to date has focused on the $H^1$-critical problem, i.e. $p = \frac{4}{d-2}$.
The present paper is an application of two lines of earlier research: adapted function spaces for critical problems, and scale-invariant Strichartz estimates for the Schr\"odinger equation on tori.
Here is a brief survey of such results and related works:
\begin{enumerate}
    \item The earliest scale-invariant Strichartz estimates for the Schr\"odinger equation on tori are due to Bourgain. In \cite{Bo93.Schrodinger}, he obtained a range of scale-invariant Strichartz estimates on square tori, which were applied to the local and small-data global theory for periodic NLS.
    \item The critical function spaces $X^s$ and $Y^s$ were introduced by Herr, Tataru, and Tzvetkov in \cite{HeTaTz12}, in which they were used to prove local wellposedness for the $H^1$-critical NLS on a partially irrational torus with $d=4$.
    This built off earlier work by Hadac, Herr, and Koch \cite{HaHeKo09}, in which the atomic spaces $U^p$ and $V^p$ (precursors to $X^s$ and $Y^s$) were introduced, and by Herr, Tataru, and Tzvetkov \cite{HeTaTz11}, which established local wellposedness and small-data global wellposedness for $H^1$-critical NLS on a square torus with $d=3$.
    Ionescu and Pausader extended the result of \cite{HeTaTz12} to obtain large-data global wellposedness $H^1$-critical defocusing NLS on $\bb{T}^3$ \cite{IoPa12}.
    \item In \cite{GuOhWa13}, Guo, Oh, and Wang proved a range of new scale-invariant Strichartz estimates for the linear Schr\"odinger evolution on irrational tori.
    As an application, they extended the result of \cite{HeTaTz11} to a partially irrational torus with $d=3$.
    They also established local wellposedness for \eqref{eqn:NLS_general} for $d\in\{2,3,4\}$ and certain choices of $p = 2k$, $k\in\bb{N}$: $d=2$ and $k\geq 6$; $d=3$ and $k\geq 3$; and $d\geq 4$ and $k\geq 2$.
    Strunk extended this result to $d=2, k\geq 3$ and $d=3, k=2$, using multilinear Strichartz estimates \cite{Strunk14} .
    \item In \cite{BoDe15}, Bourgain and Demeter proved the full range of scale-invariant Strichartz estimates on square tori.
    Killip and Visan extended this result to rectangular irrational tori, and used them to prove local wellposedness for $H^1$-critical NLS for such tori with $d=3,4$ \cite{KiVi16}.
    One feature of the proof was the use of a bilinear Strichartz estimate (which had appeared earlier in \cite{HeTaTz12} in the $d=4$ case), which allowed for a simpler proof than previous approaches which relied on multilinear estimates with three or more factors.
\end{enumerate}
In terms of methodology, our proof of Theorem \ref{theorem:main} is closely in line with that of \cite{KiVi16}, in that we rely mainly on bilinear Strichartz estimates.

Here is a brief outline of our paper.
In Section \ref{sec:notation} we lay out the notation and preliminary results used in the rest of the paper.
Our proof of Theorem \ref{theorem:main} goes via a standard contraction mapping argument for the Duhamel operator
\[
    \Phi(u)(t) = e^{it\Delta}u_0 - i\int_0^t e^{i(t-s)\Delta}F(u)~ds,
\]
where $e^{it\Delta}$ denotes the Schr\"odinger propagator and $F(u) = \pm|u|^p u$.
Our goal is to establish that $\Phi$ is a self-map and contraction mapping on some ball in some appropriate function space.
This argument is detailed in Section \ref{sec:contraction}; the proof of the contraction estimate within comprises the bulk of our paper.
Once the contraction estimate is established, we may arrive at Theorem \ref{theorem:main} via arguments from previous works, such as that in \cite{KiVi16}.
Our proof of the contraction estimate breaks into cases depending on the value of $p$.
For values of $p$ that are even integers, the term $F(u+w)-F(u)$ arising in the Duhamel integral can be manipulated algebraically, and we can straightforwardly adapt earlier proofs such as those given in \cites{HeTaTz11,IoPa12,KiVi16}; we exhibit this argument explicitly for the cubic NLS.
For other values of $p$, we apply a paradifferential calculus technique of Bony to express $F(u+w)-F(u)$ in a form similar to what can be done in the case of $p$ even.

Lastly, we note that local wellposedness is conjectured to hold for all nonlinearities $\pm|u|^p u$ with $p > \frac{4}{3}$, i.e. those with $s_c>0$.
This is the largest range of $p$ for which scale-invariant Strichartz estimates allow us to control a solution $u$ in $L_{t,x}^q$ by $s_c$ derivatives of $u$ (see Theorem \ref{theorem:scale_invariant_Strichartz}).
The methods in this paper only establish the result down to $p\geq 2$.
This is for purely technical reasons, and we have yet to undertake an investigation into ways to circumvent the current limitations of our method.
These limitations are the following.
First, in the case of non-even powers $p$ we make use of an estimate (Lemma \ref{lem:Strichartz_derivatives}) which only holds for $p>2$.
Second, we use the fact that for $p>2$ the function $F(z) = |z|^pz$ admits at least three derivatives in order to perform three iterations of a paradifferential linearization technique.
We recover the case $p=2$ manually, using algebraic simplifications that arise from the even power.
We do not believe that the limitations of our methods indicate any serious barriers to pushing the local wellposedness threshold down to $p>\frac{4}{3}$.

\section{Notation and preliminary results}\label{sec:notation}

We define the symmetric spacetime norms
\[
    \|u\|_{L_{t,x}^p([0,T)\times\bb{T}^3)}
        = \bigg(\int_0^T \int_{\bb{T}^3} |u(t,x)|^p ~dxdt\bigg)^\frac{1}{p}
\]
where $1\leq p\leq\infty$, with obvious changes if $p=\infty$.
We often suppress the spacetime domain and write $\|u\|_{L_{t,x}^p}$.

We write $X\lesssim Y$ to denote $X \leq CY$, where $C$ is some constant; when we wish to indicate dependence of $C$ on parameters $z_1,\ldots,z_k$, we write $X \lesssim_{z_1,\ldots,z_k} Y$.

Let $\phi:\bb{R}^d\to [0,\infty)$ be a smooth radial cutoff with $\phi(x) = 1$ for $|x|\leq 1$ and $\phi(x) = 0$ for $x\geq 2$.
We take the convention $0\in\bb{N}$.
For $N\in 2^\bb{N}$ a dyadic integer, write $\phi_N(x) = \phi(x/N)$ and $\psi_N(x) = \phi_N(x) - \phi_{\frac{N}{2}}(x)$, with the convention $\psi_1(x) = \phi(x)$.
For $f:\bb{T}^d\to\bb{C}$ with Fourier coefficients $\wh{f}(\xi)$, $\xi\in\bb{Z}^d$, we define the Littlewood-Paley projections as Fourier multipliers:
\[
    \wh{u_{\leq N}}(\xi) = \wh{P_{\leq N}f}(\xi) = \phi_N(\xi)\wh{f}(\xi), 
    ~\wh{u_N}(\xi) = \wh{P_Nf}(\xi) = \psi_N(\xi)\wh{f}(\xi), ~\xi\in\bb{Z}^d.
\]
In what follows, integers denoted by $N$, $N_0, N_1$, and the like will be implicitly assumed to be dyadic integers.

We now define the adapted function spaces $U^p$, $V^p$, $X^s$, and $Y^s$.
We restrict ourselves to stating the definitions and basic properties.
For a more complete reference, see \cite{KoTaVi14}.
Fix a finite time interval $[0,T)$.
Let $H$ be a separable Hilbert space over $\bb{C}$; for our purposes, this will be $\bb{C}$, $L^2(\bb{T}^3)$, or $H^{s_c}(\bb{T}^3)$.
Let $\mcal{Z}$ be the set of finite partitions $0 = t_0 < t_1 < \cdots < t_K \leq T$.
We adopt the convention that $v(T) = 0$ for all functions $v:[0,T)\to H$.
\begin{definition}
    Let $1 \leq p < \infty$.
    A \ita{$U^p$-atom} is a function $a:[0,T)\to H$ of the form
    \[
        a = \sum_{k=0}^{K-1} 1_{[t_k,t_k+1)}\phi_k,
    \]
    where $\{t_k\}\in\mcal{Z}$ and $\{\phi_k\}\subset H$ with $\sum_{k=0}^{K-1} \|\phi_k\|_H^p \leq 1$.
    The space $U^p([0,T);H)$ is the space of all functions $u:[0,T)\to H$ admitting a representation of the form
    \[
        u = \sum_{j=1}^\infty \lambda_j a_j,    
    \]
    where $a_j$ are $U^p$-atoms and $(\lambda_j) \in \ell^1(\bb{C})$.
    $U^p([0,T);H)$ is a Banach space under the norm
    \[
        \|u\|_{U^p} 
            = \inf \bigg\{ \sum_{j=1}^\infty |\lambda_j| : u = \sum_{j=1}^\infty \lambda_ja_j ~\tnm{with}~ (\lambda_j)\in\ell^1(\bb{C}) ~\tnm{and}~ a_j ~U^p-\tnm{atoms}\bigg\}.    
    \]
\end{definition}
\begin{definition}
    Let $1\leq p < \infty$.
    $V^p([0,T);H)$ is the space of all functions $v:[0,T)\to H$ with finite $V^p$-seminorm $\|v\|_{V^p}$, where
    \[
        \|v\|_{V^p}
            = \sup_{\{t_k\}\in \mcal{Z}} \bigg( \sum_{k=1}^{K-1} \|v(t_k) - v(t_{k-1})\|_H^p \bigg)^\frac{1}{p}.
    \]
    The space $V_{\tnm{rc}}^p$ is the subspace of $V^p$ consisting of right-continuous functions in $V^p$, normalized so that $\lim_{t\to 0^+} v(t) = 0$.
    The $V^p$-seminorm restricts to a norm on $V_{\tnm{rc}}^p$, and $V_{\tnm{rc}}^p$ is a Banach space under this norm.
\end{definition}
\begin{definition}
    Let $s\in\bb{R}$, $d\geq 1$.
    We define $X^s([0,T))$ and $Y^s([0,T))$ to be the Banach spaces of all functions $u:[0,T)\to H^s(\bb{T}^d)$ such that for every $\xi\in\bb{Z}^d$, the map $t\mapsto \wh{e^{-it\Delta}u(t)}(\xi)$ is in $U^2([0,T);\bb{C})$ and $V_{\tnm{rc}}^2([0,T);\bb{C})$ respectively, with norms
    \[
        \|u\|_{X^s([0,T))} = \bigg( \sum_{\xi\in\bb{Z}^d} \langle\xi\rangle^{2s}\|\wh{e^{-it\Delta}u}(t)(\xi)\|_{U^2}^2 \bigg)^\frac{1}{2},
    \]
    \[
        \|u\|_{Y^s([0,T))} = \bigg( \sum_{\xi\in\bb{Z}^d} \langle\xi\rangle^{2s}\|\wh{e^{-it\Delta}u}(t)(\xi)\|_{V^2}^2 \bigg)^\frac{1}{2}.
    \]
\end{definition}
Again, we will typically suppress the spacetime domain in our notation when it is obvious.
$X^s$ and $Y^s$ have a dual pairing in the following sense:
\begin{proposition}[$X^s$-$Y^s$ duality; \cite{HeTaTz11}*{Proposition 2.11}]\label{prop:XsYs_duality}
    Let $s\geq 0$ and $T>0$.
    For $f\in L^1([0,T);H^s(\bb{T}^d))$ we have
    \[
        \bigg\|\int_0^t e^{i(t-s)\Delta} f(s)~ds\bigg\|_{X^s([0,T))}
            \leq \sup_{\|v\|_{Y^{-s}([0,T)) = 1}} \bigg| \int_0^T\int_{\bb{T}^3} f(t,x)\ol{v(t,x)}~dxdt \bigg|.    
    \]
\end{proposition}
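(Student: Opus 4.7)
My plan is to use frequency-by-frequency $U^2$-$V^2$ duality combined with weighted $\ell^2_\xi$ duality. Setting $u(t) := \int_0^t e^{i(t-s)\Delta} f(s)\,ds$ and applying $e^{-it\Delta}$ to both sides will give $e^{-it\Delta} u(t) = \int_0^t e^{-is\Delta} f(s)\,ds$, so for each $\xi \in \bb{Z}^3$ the scalar map $t \mapsto \wh{e^{-it\Delta} u(t)}(\xi)$ is a primitive of $t \mapsto \wh{e^{-it\Delta} f(t)}(\xi)$.

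The first ingredient will be the standard scalar $U^2$-$V^2$ duality for primitives in the atomic space framework (see \cite{KoTaVi14}): for $F \in L^1([0,T); \bb{C})$, its primitive $W(t) = \int_0^t F(s)\,ds$ satisfies
\[
    \|W\|_{U^2([0,T);\bb{C})} = \sup_{\|g\|_{V_{\tnm{rc}}^2} = 1} \bigg| \int_0^T F(s) \ol{g(s)}\,ds \bigg|.
\]
Applying this at each frequency $\xi$ with $F(s) = \wh{e^{-is\Delta} f(s)}(\xi)$ and fixing $\epsilon > 0$, I would choose normalized test functions $g_\xi \in V_{\tnm{rc}}^2$ attaining the supremum up to $\epsilon$.

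The second step will be to assemble the $g_\xi$ into a single $v \in Y^{-s}$. For coefficients $(c_\xi) \subset \bb{C}$ to be determined, I define $v$ by $\wh{e^{-it\Delta} v(t)}(\xi) := c_\xi g_\xi(t)$, so that $\|v\|_{Y^{-s}}^2 = \sum_\xi \langle\xi\rangle^{-2s} |c_\xi|^2$. Plancherel in $x$, together with cancellation of the Schr\"odinger phases $e^{\pm it|\xi|^2}$ in the conjugate pairing, will yield
\[
    \int_0^T \int_{\bb{T}^3} f(t,x) \ol{v(t,x)}\,dx\,dt = \sum_\xi \ol{c_\xi} \int_0^T \wh{e^{-it\Delta} f(t)}(\xi) \ol{g_\xi(t)}\,dt.
\]
Choosing $c_\xi$ proportional to $\langle\xi\rangle^{2s}$ times the conjugate of the inner integral (so that Cauchy-Schwarz in $\ell^2_{\langle\xi\rangle^{-s}}$ is saturated) and normalizing $v$ to unit $Y^{-s}$-norm will make the right-hand side match $\|u\|_{X^s}$ up to the $\epsilon$ slack; a standard frequency truncation handles the case $\|u\|_{X^s} = \infty$, and letting $\epsilon \to 0$ completes the proof.

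The main analytic content lies in the scalar $U^2$-$V^2$ duality for primitives, which I quote from the atomic-space literature. Beyond that, I expect no substantial obstacles particular to the Schr\"odinger equation: the phase cancellation in Plancherel is immediate once $f$ and $v$ are expressed via $e^{-it\Delta}$, and the weighted $\ell^2_\xi$ optimization is routine Cauchy-Schwarz bookkeeping.
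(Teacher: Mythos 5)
The paper cites this proposition from \cite{HeTaTz11} and does not reprove it; your reconstruction matches the standard argument from the atomic-space literature (Hadac--Herr--Koch Theorem~2.8 and Proposition~2.10, used by Herr--Tataru--Tzvetkov), namely scalar $U^2$--$V^2$ duality for absolutely continuous primitives applied frequency-by-frequency, followed by a weighted Cauchy--Schwarz assembly of the near-extremizers $g_\xi$ into a single $v\in Y^{-s}$. The phase cancellation you invoke is correct (both $\wh{f}(\xi)$ and $\wh{v}(\xi)$ carry the same $e^{-it|\xi|^2}$ factor, so the product of one with the conjugate of the other reduces to the pairing of $\wh{e^{-it\Delta}f}(\xi)$ against $\ol{\wh{e^{-it\Delta}v}(\xi)}$), and choosing $c_\xi$ proportional to $\langle\xi\rangle^{2s}\|\wh{e^{-it\Delta}u}(\xi)\|_{U^2}$ (with a phase rotation of $g_\xi$ to make each scalar pairing real and nonnegative) gives $\|v\|_{Y^{-s}}=\|u\|_{X^s}$ and a lower bound of $(1-\epsilon)\|u\|_{X^s}^2$ for the pairing, which is what you need. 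Your proof is therefore correct and follows the same route as the cited source.
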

\begin{remark}\label{rem:XsYs_embeddings}
    We have a continuous embedding $X^s \incl Y^s$.
    We also have
    \[
        \|u\|_{L_t^\infty H_x^s} \lesssim \|u\|_{X^s},
    \]
    \[
        \bigg\| \int_0^t e^{i(t-s)\Delta}F(s)~ds\bigg\|_{X^s} \lesssim \|F\|_{L_t^1H_x^s}.    
    \]
\end{remark}
\begin{remark}
    The spaces $X^s$ and $Y^s$ have the scaling of $L_t^\infty H_x^s$ and enjoy several of its Fourier-based properties: for instance, we have
    \[
        \|P_N u\|_{Y^s} \sim N^s\|P_Nu\|_{Y^0}
    \]
    and
    \[
        \|u\|_{Y^s} = \bigg(\sum_N \|P_N u\|_{Y^s}^2 \bigg)^\frac{1}{2}.
    \]
\end{remark}
We now state the main tools of our analysis.
\begin{theorem}[Strichartz estimates \cite{KiVi16}]\label{theorem:scale_invariant_Strichartz}
    Fix $d\geq 1$, $1 \leq N \in 2^\bb{N}$, and $p > \frac{2(d+2)}{d}$.
    Then
    \[
        \|P_Cu\|_{L_{t,x}^p([0,1]\times\bb{T}^d)}
            \lesssim N^{\frac{d}{2}-\frac{d+2}{p}}\|P_Cu\|_{Y^0(\bb{T}^d)}
    \]
    for all $p> \frac{2(d+2)}{d}$, where $C\subset\bb{R}^d$ is a cube of side length $N$ and $P_C$ denotes the Fourier projection to $C$.
\end{theorem}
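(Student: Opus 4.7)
The plan is to reduce the $Y^0$-Strichartz estimate to the corresponding linear Strichartz estimate for the free Schr\"odinger propagator, and then transfer it through the adapted function space framework.

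First, I would establish the linear Strichartz estimate
\[
    \|e^{it\Delta}f\|_{L^p_{t,x}([0,1]\times\bb{T}^d)} \lesssim N^{\frac{d}{2}-\frac{d+2}{p}} \|f\|_{L^2(\bb{T}^d)}
\]
for all $f$ with Fourier support in $C$ and all $p > \frac{2(d+2)}{d}$. This is the heart of the matter and the step I expect to demand nearly all of the work. On square tori it is the $\ell^2$-decoupling theorem of Bourgain and Demeter \cite{BoDe15} applied to the truncated paraboloid $\{(\xi,|\xi|_g^2) : \xi \in \bb{Z}^d\cap C\}$ (with $|\cdot|_g$ the metric-twisted norm); its extension to the rectangular irrational tori considered here is exactly what Killip and Visan supply in \cite{KiVi16}. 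Granted decoupling, one obtains the stated linear estimate by combining the resulting $\ell^2$-orthogonality over unit-frequency sub-cubes of $C$ with a local-in-time Euclidean Strichartz bound and a volume count of the lattice points.

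Second, I would upgrade to $U^p$-atoms adapted to $e^{it\Delta}$. For a step function of the form $u(t)=\sum_k \mathbf{1}_{[t_k,t_{k+1})}(t)\,e^{it\Delta}\phi_k$ with $\sum_k \|\phi_k\|_{L^2}^p \leq 1$, disjointness of intervals gives
\[
    \|P_Cu\|_{L^p_{t,x}}^p = \sum_k \|e^{it\Delta}P_C\phi_k\|_{L^p_{t,x}([t_k,t_{k+1}))}^p \lesssim N^{p\left(\frac{d}{2}-\frac{d+2}{p}\right)}\sum_k \|P_C\phi_k\|_{L^2}^p \lesssim N^{p\left(\frac{d}{2}-\frac{d+2}{p}\right)},
\]
so every such atom obeys the desired bound. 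Summing over an atomic decomposition via the triangle inequality, in the standard $U^p$/$V^p$ transfer principle of \cite{KoTaVi14}, promotes this to $\|P_C u\|_{L^p_{t,x}} \lesssim N^{d/2-(d+2)/p}\|P_C u\|_{X^0_p}$, where $X^0_p$ denotes the space obtained by replacing $U^2$ with $U^p$ in the definition of $X^0$.

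Finally, since $p>\frac{2(d+2)}{d}>2$ for every $d\geq 1$, the embedding $V^2_{\mathrm{rc}}\hookrightarrow U^p$ (which holds continuously precisely for $p>2$; see \cite{KoTaVi14}) applied component-wise on the Fourier side yields $\|P_Cu\|_{X^0_p} \lesssim \|P_Cu\|_{Y^0}$. Composing with the previous step gives the claimed bound. Only the first step involves nontrivial harmonic analysis; the remaining two steps are routine structural facts within the $U^p$/$V^p$ framework.
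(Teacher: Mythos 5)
The paper itself does not prove this theorem; it imports it verbatim from Killip--Visan \cite{KiVi16}, and the underlying strategy there (and in its precursor \cite{HeTaTz11}*{Proposition 2.8}) is exactly the three-step plan you outline: linear Strichartz for $e^{it\Delta}$ on frequency cubes via $\ell^2$-decoupling, transfer to $U^p_\Delta$ by the atomic structure, and then the embedding $Y^0\hookrightarrow V^2_\Delta\hookrightarrow U^p_\Delta$. So your overall architecture is the right one, and the genuinely hard analytic content is correctly attributed to \cite{BoDe15} and \cite{KiVi16}.

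There is, however, one step that does not go through as written. Your atom computation establishes the estimate for $U^p$-atoms \emph{valued in $L^2(\bb{T}^d)$}, i.e.\ for the space $U^p_\Delta L^2$, whose atoms share a common partition of $[0,T)$. You then claim this ``promotes to $X^0_p$,'' where $X^0_p$ is the Fourier-coefficient-by-coefficient $U^p$ space (with $\ell^2_\xi$ summation). These two spaces are not the same when $p>2$: in $X^0_p$ the temporal partitions may vary with $\xi$, and the inclusion between $X^0_p$ and $U^p_\Delta L^2$ is not the one you need (nor is it clear either inclusion holds uniformly once $p>2$; the Minkowski inequality $\ell^p_k\ell^2_\xi\hookrightarrow\ell^2_\xi\ell^p_k$ goes the wrong way for concluding $\|a\|_{X^0_p}\leq 1$ from $\sum_k\|\phi_k\|_{L^2}^p\leq 1$). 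Consequently the atom argument does not by itself prove $\|P_Cu\|_{L^p_{t,x}}\lesssim N^{d/2-(d+2)/p}\|P_Cu\|_{X^0_p}$. The repair is to avoid $X^0_p$ entirely and instead use the Hilbert-space-valued chain: one checks, by interchanging the supremum over partitions with the $\ell^2_\xi$-sum, that $\|e^{-it\Delta}u\|_{V^2([0,T);L^2)}\leq\|u\|_{Y^0}$, i.e.\ $Y^0\hookrightarrow V^2_\Delta L^2$; then the Hadac--Herr--Koch embedding $V^2_{\mathrm{rc}}(H)\hookrightarrow U^p(H)$ (valid for any Hilbert space $H$ and $p>2$) gives $V^2_\Delta L^2\hookrightarrow U^p_\Delta L^2$; and now your atom argument applies directly. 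With this substitution the proof is correct and matches the literature.
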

As a direct consequence of Theorem \ref{theorem:scale_invariant_Strichartz}, we have:
\begin{lemma}[Bilinear Strichartz estimate \cite{KiVi16}*{Lemma 3.1}]\label{lem:bilinear_Strichartz}
    Fix $d\geq 3$ and $T\leq 1$.
    Then for all $1 \leq N_2 \leq N_1$,
    \[
        \|u_{N_1}v_{N_2}\|_{L_{t,x}^2}
            \lesssim N_2^{\frac{d-2}{2}}\|u_{N_1}\|_{Y^0}\|v_{N_2}\|_{Y^0}.    
    \]
    The implicit constant does not depend on $T$.
\end{lemma}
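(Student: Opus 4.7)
The plan is to reduce the bilinear estimate to the linear scale-invariant Strichartz estimate (Theorem \ref{theorem:scale_invariant_Strichartz}) at exponent $p=4$ by decomposing the high-frequency factor into cubes of side length $N_2$. This is the standard route pioneered by Bourgain for square tori and generalized in \cite{KiVi16}.

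First, I would write $u_{N_1} = \sum_C P_C u_{N_1}$ where $C$ ranges over a family of essentially disjoint cubes of side length $N_2$ that cover the annulus $\{|\xi|\sim N_1\}$ intersected with $\bb{Z}^d$. For each such cube $C$, the spacetime Fourier support of $P_C u_{N_1} \cdot v_{N_2}$ (with respect to the spatial variable) lies in a box of side $O(N_2)$ centered near $C$. Hence, as $C$ varies, these supports have bounded overlap, giving the almost-orthogonality relation
\[
    \|u_{N_1} v_{N_2}\|_{L^2_{t,x}}^2 \lesssim \sum_C \|P_C u_{N_1} \cdot v_{N_2}\|_{L^2_{t,x}}^2.
\]

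Next, I would apply the Cauchy--Schwarz inequality pointwise in $(t,x)$ followed by Theorem \ref{theorem:scale_invariant_Strichartz} at $p=4$, which is admissible for all $d\geq 3$ since $4 > \tfrac{2(d+2)}{d}$. For the factor $P_C u_{N_1}$, the cube $C$ has side length $N_2$, and for $v_{N_2}$ the Littlewood--Paley block sits inside a cube of side $\sim N_2$. So we have
\[
    \|P_C u_{N_1}\|_{L^4_{t,x}} \lesssim N_2^{\frac{d-2}{4}} \|P_C u_{N_1}\|_{Y^0}, \quad \|v_{N_2}\|_{L^4_{t,x}} \lesssim N_2^{\frac{d-2}{4}} \|v_{N_2}\|_{Y^0},
\]
which gives $\|P_C u_{N_1} \cdot v_{N_2}\|_{L^2_{t,x}}^2 \lesssim N_2^{d-2} \|P_C u_{N_1}\|_{Y^0}^2 \|v_{N_2}\|_{Y^0}^2$.

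Finally, I would sum over the cubes $C$. Because $Y^0$ is defined by an $\ell^2$-sum of $V^2$-norms of Fourier coefficients, and the cubes $C$ partition the frequency support of $u_{N_1}$, we get $\sum_C \|P_C u_{N_1}\|_{Y^0}^2 = \|u_{N_1}\|_{Y^0}^2$, producing the claimed bound after taking square roots. The main technical issue to handle carefully is the almost-orthogonality step: one must verify that the spatial Fourier supports of $P_C u_{N_1} \cdot v_{N_2}$ really do have bounded overlap as $C$ varies, but since $v_{N_2}$ is localized in an annulus of radius $N_2$ and each $C$ also has side $N_2$, the Minkowski sum lives in a fixed $O(N_2)$-neighborhood of $C$, so each frequency is hit by $O(1)$ cubes and Plancherel suffices. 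Independence from $T$ is automatic since both sides scale the same way under restriction, and the Strichartz estimate is stated on $[0,1]$.
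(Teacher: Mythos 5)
Your proof is correct and fills in the details of exactly what the paper means when it says the bilinear estimate is ``a direct consequence'' of Theorem \ref{theorem:scale_invariant_Strichartz}: tile the high-frequency annulus by cubes of side $N_2$, use Plancherel-based almost-orthogonality for the bounded-overlap Fourier supports of $P_C u_{N_1}\cdot v_{N_2}$, apply H\"older ($L^2 \leq L^4\cdot L^4$) and the $L^4$ Strichartz estimate on each cube, and resum via the $\ell^2$ structure of $Y^0$. This is the standard route used in \cite{KiVi16}, and your checks (that $4>\tfrac{2(d+2)}{d}$ for $d\geq 3$, and that the orthogonality and $Y^0$ summation both go through) are accurate.
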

Lastly, we state some useful fractional calculus estimates:
\begin{proposition}[Fractional product rule]\label{prop:frac_product}
    Let $d\geq 1$, $s>0$, $1 < p < \infty$, and $1 < p_2,q_2\leq \infty$ such that $\frac{1}{p} = \frac{1}{p_1} + \frac{1}{p_2} = \frac{1}{q_1} + \frac{1}{q_2}$.
    Then
    \[
        \||\nabla|^s(fg)\|_{L^p(\bb{T}^d)}
            \lesssim \||\nabla|^sf\|_{L^{p_1}(\bb{T}^d)}\|g\|_{L^{p_2}(\bb{T}^d)} + \||\nabla|^sg\|_{L^{q_1}(\bb{T}^d)}\|f\|_{L^{q_2}(\bb{T}^d)}.
    \]
\end{proposition}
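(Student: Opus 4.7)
The plan is to prove this estimate by Bony's paraproduct decomposition. Write $f = \sum_{N_1} P_{N_1} f$ and $g = \sum_{N_2} P_{N_2} g$, and split
\[
    fg = \Pi_1(f,g) + \Pi_2(f,g) + \Pi_3(f,g),
\]
where $\Pi_1$ collects the terms with $N_2 \ll N_1$ (high $f$, low $g$), $\Pi_2$ collects $N_1 \ll N_2$ (low $f$, high $g$), and $\Pi_3$ collects the diagonal $N_1 \sim N_2$. All the ingredients --- the Littlewood-Paley square function characterization of $L^p$, the Hardy-Littlewood maximal function, and H\"older's inequality --- are available on $\bb{T}^d$ for $1 < p < \infty$, and the Euclidean Kato-Ponce / Kenig-Ponce-Vega proof transfers essentially verbatim; I outline the structure.

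For $\Pi_1$, the Fourier support of $P_{N_1} f \cdot P_{N_2} g$ with $N_2 \ll N_1$ is contained in an annulus of radius $\sim N_1$, so the pieces are essentially orthogonal in the output frequency, and $|\nabla|^s$ simply produces a factor $\sim N_1^s$. Applying the $L^p$ square function, H\"older's inequality with $\tfrac{1}{p} = \tfrac{1}{p_1} + \tfrac{1}{p_2}$, and controlling the low-frequency sum $\sum_{N_2 \ll N_1} P_{N_2} g$ pointwise by the Hardy-Littlewood maximal function of $g$ (bounded on $L^{p_2}$ since $p_2 > 1$), one obtains
\[
    \||\nabla|^s \Pi_1(f,g)\|_{L^p} \lesssim \||\nabla|^s f\|_{L^{p_1}} \|g\|_{L^{p_2}}.
\]
The symmetric argument applied to $\Pi_2$, with exponents $\tfrac{1}{p} = \tfrac{1}{q_1} + \tfrac{1}{q_2}$, yields the second term on the right-hand side.

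The main obstacle is the diagonal piece $\Pi_3$, where $N_1 \sim N_2 =: N$. The product $P_N f \cdot P_N g$ is Fourier supported only in a ball of radius $\lesssim N$, so unlike the off-diagonal pieces it is not annulus-localized, and its frequency content can range down to $0$; this prevents a direct use of orthogonality in $N$. I would handle it by further Littlewood-Paley decomposing the output as $|\nabla|^s P_M (P_N f \cdot P_N g)$ with $M \lesssim N$, using the crude bound $\||\nabla|^s P_M (\cdot)\|_{L^p} \lesssim M^s \|\cdot\|_{L^p}$ and exploiting the geometric gain $(M/N)^s$ (which requires $s > 0$) to resum over $M$. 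After applying the square function in the output and Cauchy-Schwarz in $N$, one controls $\Pi_3$ by either term on the right-hand side.

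The technical heart is thus the diagonal regime: the hypothesis $s > 0$ (not $s \geq 0$) is exactly what supplies the geometric decay needed to resum the low-output-frequency portion of $\Pi_3$ without logarithmic losses. Combining the three paraproduct estimates gives the claimed bound.
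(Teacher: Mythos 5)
Your proof sketch is correct, but it is worth flagging that the paper itself offers no proof of this proposition: it simply cites Christ--Weinstein \cite{ChWe91} for the Euclidean statement and remarks that the result transfers to $\bb{T}^d$ ``via estimates on the periodic Littlewood-Paley convolution kernels.'' What you have written out is therefore not a reconstruction of the paper's argument but an actual proof outline in the Kato--Ponce / Kenig--Ponce--Vega style, and it is a sound one. The paraproduct splitting, the annular frequency localization of $\Pi_1$ and $\Pi_2$, the pointwise control of $P_{\ll N}g$ by the maximal function, and the use of $s>0$ to gain geometric summability in the diagonal piece are all exactly the right ingredients, and all the harmonic-analytic machinery you invoke (square-function characterization of $L^p$, Mikhlin-type multiplier bounds, maximal inequalities) is available on $\bb{T}^d$ for $1<p<\infty$.

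One small point deserving more care than your sketch gives it: in the diagonal piece $\Pi_3$, after passing to the square function in the output frequency $M$ and trying to sum over $N\gtrsim M$, you will need the Fefferman--Stein vector-valued maximal inequality (not merely the scalar Hardy--Littlewood bound) to control $\|(\sum_N N^{2s}|M(P_Nf\cdot\tilde P_Ng)|^2)^{1/2}\|_{L^p}$ after bounding $|P_M(\cdot)|$ pointwise by the maximal function. The phrase ``Cauchy--Schwarz in $N$'' also needs the weight $(M/N)^{s}$ to be split as $(M/N)^{s/2}\cdot(M/N)^{s/2}$ so that one factor absorbs the sum over $N$ while the other is passed into the square. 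This is standard but is precisely where the hypothesis $s>0$ does its work, so it is worth spelling out. With that amendment, the argument is complete and gives a self-contained proof of the proposition that the paper treats as black-box.
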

\begin{proposition}[Fractional chain rule]\label{prop:frac_chain}
    Suppose $F:\bb{C}\to\bb{C}$ satisfies $|F(u)-F(v)| \lesssim |u-v|(G(u)+G(v))$ for some $G:\bb{C}\to[0,\infty)$.
    Let $d\geq 1$, $0<s <1$, $1 < p <\infty$, and $1 < p_2 \leq \infty$, such that $\frac{1}{p} = \frac{1}{p_1} + \frac{1}{p_2}$.
    Then
    \[
        \||\nabla|^s F(u)\|_{L^p(\bb{T}^d)}
            \lesssim \||\nabla|^s u\|_{L^{p_1}(\bb{T}^d)}\|G(u)\|_{L^{p_2}(\bb{T}^d)}.    
    \]
\end{proposition}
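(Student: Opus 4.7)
The plan is to reduce the statement to a pointwise estimate by invoking an equivalent characterization of the Sobolev norm in terms of local differences, and then exploiting the hypothesis on $F$ at the pointwise level. For $0<s<1$ and $1<p<\infty$ we have the Strichartz--Calder\'on equivalence
\[
    \||\nabla|^s f\|_{L^p(\bb{T}^d)} \sim \|\mcal{D}_s f\|_{L^p(\bb{T}^d)},
    \qquad
    \mcal{D}_s f(x) = \sup_{0<r<1} \frac{r^{-s}}{|B(x,r)|}\int_{B(x,r)} |f(y)-f(x)|\,dy,
\]
together with its $L^q$-averaged variants $\mcal{D}_s^{(q)}$ for any $1\le q<p$. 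I would first check these equivalences in the toroidal setting, which is routine since only small scales $r<1$ need be considered.

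Next, I would insert the hypothesis
\[
    |F(u)(y)-F(u)(x)| \lesssim |u(y)-u(x)|\bigl(G(u)(y)+G(u)(x)\bigr)
\]
into the definition of $\mcal{D}_s[F(u)](x)$ and split into two pieces. The $G(u)(x)$ piece pulls out of the $y$-integral to yield the clean pointwise bound $G(u)(x)\,\mcal{D}_s u(x)$. For the $G(u)(y)$ piece, H\"older's inequality inside the inner average with conjugate exponents $q, q'$ gives the pointwise bound $\mcal{D}_s^{(q)} u(x) \cdot \bigl(M[G(u)^{q'}](x)\bigr)^{1/q'}$, where $M$ is the Hardy--Littlewood maximal operator.

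Taking $L^p$ norms, applying H\"older's inequality with exponents $(p_1,p_2)$, and using both the Strichartz--Calder\'on equivalence in $L^{p_1}$ and the $L^{p_2}$-boundedness of the sublinear operator $f\mapsto (M[|f|^{q'}])^{1/q'}$ (valid when $p_2>q'$), I obtain
\[
    \||\nabla|^s F(u)\|_{L^p} \lesssim \|G(u)\|_{L^{p_2}} \, \||\nabla|^s u\|_{L^{p_1}}.
\]
The two exponent constraints $q<p_1$ (for the equivalence) and $q>p_2'$ (for the maximal bound) are simultaneously solvable precisely when $\tfrac{1}{p_1}+\tfrac{1}{p_2}<1$, which holds by hypothesis since $p>1$.

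The main obstacle is controlling the $G(u)(y)$ piece, where $G(u)$ is evaluated at the integration variable: one must absorb this nonlocal dependence while preserving the product structure of the bound. The maximal function argument above resolves this in the interior of the exponent range, but requires the careful choice of the auxiliary exponent $q$; at the endpoint $p_2=\infty$ it collapses to the trivial bound $G(u)(y)\leq \|G(u)\|_{L^\infty}$. A Littlewood--Paley alternative, decomposing $F(u)$ via paraproducts, would also succeed at the cost of additional combinatorial bookkeeping.
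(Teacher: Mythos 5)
The paper does not actually prove this proposition; it cites Christ--Weinstein for the Euclidean case and remarks that the transfer to $\mathbb{T}^d$ is routine via the periodic Littlewood--Paley kernels. The implicit route is therefore the standard paraproduct argument: decompose $F(u)=\sum_N[F(u_{\le N})-F(u_{\le N/2})]$, apply the mean value theorem and the hypothesis to each increment, and sum via vector-valued maximal-function and square-function estimates. Your route is genuinely different: you invoke the characterization of $W^{s,p}$ for $0<s<1$, $1<p<\infty$ via the $L^p$ norm of the maximal difference operator $\mathcal{D}_s$ and its $L^q$-averaged variants (a Devore--Sharpley/Dorronsoro-type result, with the square-function version due to Strichartz), and then split the pointwise hypothesis on $F$ directly inside the averages. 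The strategy is sound, the exponent arithmetic (the window $p_2'<q<p_1$, nonempty iff $p>1$) is correct, and the argument exploits the Lipschitz hypothesis on $F$ at the pointwise level while avoiding frequency-interaction bookkeeping. The trade-off is that you treat the maximal-difference characterization as a black box, and that characterization is itself a nontrivial theorem of Fefferman--Stein type: the direction $\||\nabla|^s g\|_{L^p}\lesssim\|\mathcal{D}_s g\|_{L^p}$ (which you need for $g=F(u)$) is the sharp-maximal-function direction, and the reverse direction $\|\mathcal{D}_s^{(q)}u\|_{L^{p_1}}\lesssim\||\nabla|^s u\|_{L^{p_1}}$ for $q<p_1$ (which you need for $u$) requires its own argument. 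Be aware also that the standard statements subtract the ball average $f_B$ rather than $f(x)$; your version dominates the sharp maximal function pointwise, so the forward direction is immediate, but the reverse direction for $u$ then needs a remark (harmless for smooth $u$, which suffices here by density).
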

\begin{proposition}[Nonlinear Bernstein]\label{prop:nonlinear_Bernstein}
    Let $G:\bb{C}\to\bb{C}$ be H\"older continuous of order $\alpha\in(0,1]$.
    Let $d\geq 1$ and $1 \leq p \leq \infty$.
    Then for $u:\bb{T}^d\to\bb{C}$ smooth and periodic, we have
    \[
        \|P_N G(u)\|_{L^{p/\alpha}(\bb{T}^d)} \lesssim N^{-\alpha}\|\nabla u\|_{L^p(\bb{T}^d)}^\alpha  
    \]
    for all $N>1$.
\end{proposition}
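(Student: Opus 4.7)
The plan is to exploit the fact that the convolution kernel of $P_N$ has mean zero for $N>1$, which lets me rewrite $P_N G(u)$ as an integral of the differences $G(u(x-y))-G(u(x))$ rather than of $G(u)$ itself. H\"older continuity of $G$ then trades those differences for $|u(x-y)-u(x)|^\alpha$, and the classical pointwise Sobolev estimate supplies a bound on $|u(x-y)-u(x)|$ in terms of $|y|$ times the Hardy--Littlewood maximal function of $\nabla u$; the factor of $|y|^\alpha$ produces the gain of $N^{-\alpha}$.

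Concretely, let $K_N$ denote the convolution kernel of $P_N$ on $\mathbb{T}^d$. For $N>1$ it has mean zero and satisfies $|K_N(y)|\lesssim N^d(1+N|y|)^{-L}$ for any large $L$. Writing
\[
    P_N G(u)(x)=\int K_N(y)\bigl[G(u(x-y))-G(u(x))\bigr]\,dy,
\]
applying the H\"older bound $|G(z)-G(w)|\lesssim|z-w|^\alpha$, and then the standard pointwise estimate $|u(x-y)-u(x)|\lesssim |y|\bigl[M(\nabla u)(x)+M(\nabla u)(x-y)\bigr]$, I would obtain
\[
    |P_N G(u)(x)|\lesssim \int |K_N(y)|\,|y|^\alpha\bigl(M(\nabla u)(x)^\alpha+M(\nabla u)(x-y)^\alpha\bigr)\,dy.
\]
Since $|K_N(y)|\,|y|^\alpha\lesssim N^{-\alpha}\rho_N(y)$ for an integrable approximate identity $\rho_N$ at scale $N^{-1}$, this gives the pointwise bound
\[
    |P_N G(u)(x)|\lesssim N^{-\alpha}\Bigl(M(\nabla u)(x)^\alpha+\bigl[\rho_N\ast (M(\nabla u))^\alpha\bigr](x)\Bigr).
\]
Taking the $L^{p/\alpha}$ norm, using the identity $\|f^\alpha\|_{L^{p/\alpha}}=\|f\|_{L^p}^\alpha$, the $L^{p/\alpha}$-boundedness of convolution with $\rho_N$ (uniformly in $N$), and the Hardy--Littlewood maximal inequality on $L^p$, would then yield $\|P_N G(u)\|_{L^{p/\alpha}}\lesssim N^{-\alpha}\|\nabla u\|_{L^p}^\alpha$.

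The main technical points I expect to address are: (i) transferring the pointwise Sobolev inequality from $\mathbb{R}^d$ to the torus, which is routine by localizing with a smooth cutoff of unit scale; (ii) the degenerate case $p=\infty$, where $M$ acts trivially and the argument is immediate; and (iii) the endpoint $p=\alpha=1$, where $p/\alpha=1$ and the Hardy--Littlewood maximal inequality fails. The last case one dispatches separately, since for $\alpha=1$ the function $G$ is Lipschitz, so $\|\nabla G(u)\|_{L^1}\lesssim \|\nabla u\|_{L^1}$, and then the ordinary linear Bernstein estimate $\|P_N f\|_{L^1}\lesssim N^{-1}\|\nabla f\|_{L^1}$ finishes the job. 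The only substantive step is the algebraic manipulation leading to the displayed pointwise bound on $P_N G(u)$; everything else is bookkeeping about mapping properties of $M$ and $\rho_N\ast\,\cdot\,$ on $L^{p/\alpha}$.
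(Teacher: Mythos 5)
Your opening move — exploiting the vanishing mean of the $P_N$ kernel for $N>1$ to pass to differences and then invoking H\"older continuity of $G$ — is exactly right and is the same starting point as the Euclidean argument in \cite{KiVi13.Clay} that the paper cites in lieu of a proof. The gap is in routing the remainder through the pointwise estimate $|u(x-y)-u(x)|\lesssim|y|\bigl[M(\nabla u)(x)+M(\nabla u)(x-y)\bigr]$: your closing step uses $\|M(\nabla u)\|_{L^p}\lesssim\|\nabla u\|_{L^p}$ (since $\|M(\nabla u)^\alpha\|_{L^{p/\alpha}}=\|M(\nabla u)\|_{L^p}^\alpha$), and this fails at $p=1$ for \emph{every} $\alpha\in(0,1]$, not only at $p=\alpha=1$ as you identify in point (iii). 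The maximal inequality is applied on $L^p$, not on $L^{p/\alpha}$, so the problematic case is $p=1$ regardless of $\alpha$; your Lipschitz patch only recovers $p=1,\alpha=1$, leaving $p=1$, $0<\alpha<1$ uncovered even though the proposition asserts the full range $1\leq p\leq\infty$.

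The gap is avoidable because the maximal function is unnecessary. From
\[
    |P_N G(u)(x)|\lesssim\int|K_N(y)|\,|u(x-y)-u(x)|^\alpha\,dy,
\]
apply Minkowski's integral inequality in the $L^{p/\alpha}_x$ norm (valid since $p/\alpha\geq 1$) together with $\||f|^\alpha\|_{L^{p/\alpha}}=\|f\|_{L^p}^\alpha$ to obtain
\[
    \|P_N G(u)\|_{L^{p/\alpha}}\lesssim\int|K_N(y)|\,\|u(\cdot-y)-u\|_{L^p}^\alpha\,dy,
\]
and then use the elementary translate bound $\|u(\cdot-y)-u\|_{L^p}\leq|y|\,\|\nabla u\|_{L^p}$, which follows from the fundamental theorem of calculus and Minkowski's integral inequality for all $1\leq p\leq\infty$, together with $\int|K_N(y)|\,|y|^\alpha\,dy\lesssim N^{-\alpha}$. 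This yields the claim uniformly over the full stated range with no endpoint casework and no appeal to $M$.
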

In the Euclidean setting Propositions \ref{prop:frac_product} and \ref{prop:frac_chain} are due to Christ and Weinstein \cite{ChWe91}.
Proposition \ref{prop:nonlinear_Bernstein} in the Euclidean setting appears in \cite{KiVi13.Clay}.
These results can be extended to the periodic setting via estimates on the periodic Littlewood-Paley convolution kernels.

\section{The contraction mapping estimate}\label{sec:contraction}
Fix an initial datum $u_0\in H^{s_c}(\bb{T}^3)$.
Consider the Duhamel operator
\[
    \Phi(u)(t) = e^{it\Delta}u_0 - i\int_0^t e^{i(t-s)\Delta}F(u(s))~ds,
\]
where $F(u) = \pm |u|^p u$.
As the choice of sign is irrelevant for everything that follows, we will take $F(u) = |u|^p u$ from here on.
To prove Theorem \ref{theorem:main} for the datum $u_0$ it suffices to show that there exists a time $T$ and a ball $B \subset C_t([0,T);H^{s_c}(\bb{T}^3))\cap X^{s_c}([0,T))$ on which $\Phi$ is a self-map and contraction mapping: then the Banach fixed-point theorem implies that $\Phi$ has a unique fixed point in $B$, which is the solution to \eqref{eqn:NLS} we seek.
The goal of this section is the contraction mapping estimate:
\begin{proposition}\label{prop:main_estimates}
    Fix $p \geq 2$.
    Let $0 < T \leq 1$.
    Then
    \begin{align*}
        \bigg\| \int_0^t e^{i(t-s)\Delta} &[F(u+w)(s) - F(u)(s)]~ds \bigg\|_{X^{s_c}([0,T))}\\
            &\lesssim \|w\|_{X^{s_c}([0,T))}(\|u\|_{X^{s_c}([0,T))} + \|w\|_{X^{s_c}([0,T))})^p.
    \end{align*}
    The implicit constant does not depend on $T$.
\end{proposition}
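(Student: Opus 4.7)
The plan is to apply $X^{s_c}$-$Y^{-s_c}$ duality (Proposition~\ref{prop:XsYs_duality}) to recast the claim as a multilinear estimate
\[
    \bigg|\int_0^T\!\int_{\bb{T}^3}[F(u+w)-F(u)]\,\ol{v}\,dx\,dt\bigg| \lesssim \|v\|_{Y^{-s_c}}\|w\|_{X^{s_c}}(\|u\|_{X^{s_c}}+\|w\|_{X^{s_c}})^p,
\]
valid for all admissible test functions $v$. After a Littlewood-Paley decomposition of every factor, this becomes a sum of dyadic multilinear spacetime integrals. The intended workflow is standard for this class of problems: apply the bilinear Strichartz estimate (Lemma~\ref{lem:bilinear_Strichartz}) to the two highest-frequency factors to exploit transverse interactions and gain a power of the smaller of the two frequencies, use H\"older's inequality together with the scale-invariant Strichartz estimate (Theorem~\ref{theorem:scale_invariant_Strichartz}) on the remaining factors, and then sum dyadically using the geometric decay supplied by $s_c>0$. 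Transfer from the $Y^0$-native Strichartz bounds to the $X^{s_c}$-norm on the left-hand side is carried out by the standard $U^p$-$V^p$ interpolation, whose logarithmic losses are absorbed by the dyadic gain.

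For $p$ an even integer, $F(u+w)-F(u)$ expands algebraically as a finite sum of $(p+1)$-linear monomials in $u,\ol{u},w,\ol{w}$, each containing at least one factor of $w$ or $\ol{w}$. Paired with the dual test function $v$, each monomial is precisely the type of $(p+2)$-linear form treated by the workflow above, following the strategy of \cite{HeTaTz11}, \cite{IoPa12}, and \cite{KiVi16}; the analysis splits further according to the ordering of dyadic frequencies, but bilinear Strichartz on the two largest and scale-invariant Strichartz plus H\"older on the rest always suffices. For non-even $p$, the algebraic identity is unavailable, and I would instead linearize by a Bony-type paradifferential calculus: writing
\[
    F(u+w)-F(u) = \int_0^1 \big[F_z(u+\theta w)\,w + F_{\ol{z}}(u+\theta w)\,\ol{w}\big]\,d\theta
\]
and decomposing each product $F_z(u+\theta w)\cdot w$ into paraproducts (high-high, high-low, low-high) reduces matters to bounds on frequency-localized bilinear expressions. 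The fractional chain rule (Proposition~\ref{prop:frac_chain}), the fractional product rule (Proposition~\ref{prop:frac_product}), and the nonlinear Bernstein inequality (Proposition~\ref{prop:nonlinear_Bernstein}) are then used to distribute the $s_c$ derivatives onto the non-smooth composite factor $F_z(u+\theta w)$. When $p>2$ the function $F$ is of class $C^3$, which is enough regularity to iterate this linearization three times and thereby expose sufficient multilinear structure to fall back on the bilinear Strichartz workflow.

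The main obstacle is the non-even case: the paradifferential decomposition must be unrolled carefully enough to isolate two factors suitable for a bilinear Strichartz pairing while all remaining factors retain enough structure to be absorbed by scale-invariant Strichartz and H\"older estimates. The most dangerous terms are the high-high interactions producing low-frequency output, where the nonlinear Bernstein inequality is needed to recover the derivative gain that would otherwise be lost. Meanwhile, the endpoint $p=2$ must be covered by the algebraic expansion alone, as the auxiliary derivative-transfer estimate (Lemma~\ref{lem:Strichartz_derivatives} of the introduction) that underlies the paradifferential approach is available only for $p>2$. Threading consistent dyadic bookkeeping through all these sub-cases, so that every frequency summation reduces to a geometric series in a favorable ratio, is the principal technical effort of the proof.
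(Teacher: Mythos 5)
Your overall architecture (duality reduction, Littlewood–Paley decomposition, bilinear Strichartz on the two highest frequencies paired with scale-invariant Strichartz and H\"older on the rest, Cauchy–Schwarz summation) matches the paper, and your treatment of the even-integer case is essentially identical to the paper's. Your remark that $p=2$ must be handled by the algebraic expansion alone, because the derivative-transfer estimates fail at the $L^{10/3}_{t,x}$ endpoint, also agrees.

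Where you depart from the paper is in the linearization for non-even $p$, and it is here that there is a gap. You apply the fundamental theorem of calculus directly to $F(u+w)-F(u)$, producing $\int_0^1 F_z(u+\theta w)\,w\,d\theta$, and then propose a paraproduct decomposition of the product $F_z(u+\theta w)\cdot w$. The composite factor $F_z(u+\theta w)$ has no a priori Fourier localization, so you cannot frequency-restrict its argument; this is precisely what blocks the differentiate-and-Bernstein gain that drives the estimate. The paper avoids this by first applying the \emph{telescoping} Bony linearization (Proposition~\ref{prop:Bony_linearization}), $F(g) = F(g_{\leq 1}) + \sum_{N\geq 2}[F(g_{\leq N}) - F(g_{\leq N/2})]$, and only then applying FTC to each telescoped difference, which yields $g_N\int_0^1 \partial_z F((P_{\leq N/2}+\theta P_N)g)\,d\theta$. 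Here the argument of $\partial_z F$ is automatically localized to frequencies $\lesssim N$. That localization is load-bearing: in the incomparable-frequency regime one projects $D_{N_1}F(h_{\leq N_1})$ onto frequencies $\sim N_0$ and uses Bernstein plus the pointwise derivative bounds of Lemma~\ref{lem:derivative_estimates} to gain $(N_1/N_0)^{1-s_c}$ or $(N_1/N_0)^{2-s_c}$, which requires knowing $h$ lives at frequencies $\lesssim N_1$. Your FTC-then-paraproduct route does not supply this.

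You also do not address the comparable-frequency case $N_0\sim N_1$, where the projection-and-Bernstein trick fails because there is no dominant frequency to restrict to. The paper handles this by iterating the telescoping Bony linearization three or four times (four when $p\geq 3$, using $\partial^4 F$; three plus a nonlinear-Bernstein step when $2<p<3$, since $\partial^3 F$ is only H\"older continuous) to produce a genuinely multilinear expression $u_{N_1}^{(1)}u_{N_2}^{(2)}u_{N_3}^{(3)}D_{N_3}F(h_{\leq N_3})$ that can be attacked with bilinear Strichartz on two pairs. Your count of ``three iterations'' is close but does not distinguish the $p\geq 3$ and $2<p<3$ sub-cases, and without the telescoping structure it is not clear your iterations terminate in a usable multilinear form. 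Finally, the fractional chain and product rules you invoke to ``distribute $s_c$ derivatives'' are not used in the paper's proof of this proposition at all (they appear only in the self-map/contraction step cited from \cite{KiVi16}); the paper instead uses the explicit derivative estimates of Lemma~\ref{lem:derivative_estimates} together with the derivative-gain Strichartz bounds of Lemma~\ref{lem:Strichartz_derivatives}, which are sharper for this purpose because they act on frequency-localized data.
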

Proposition \ref{prop:main_estimates}, together with Propositions \ref{prop:frac_product} and \ref{prop:frac_chain}, imply via arguments by previous authors that $\Phi$ is indeed a self-map and contraction mapping on some ball in $X^{s_c}([0,T])\cap C_tH_x^{s_c}([0,T]\times\bb{T}^3)$, provided that $T$ is chosen sufficiently small. As just one example of this argument, we refer the reader to Section 4 of \cite{KiVi16} for the details in the case of the $H^1$-critical cubic and quintic NLS.
Therefore, we focus our attention on the proof of Proposition \ref{prop:main_estimates} for the remainder of this paper.

First we make some reductions.
By $X^s$-$Y^s$-duality (Proposition \ref{prop:XsYs_duality}) and self-adjointness of Littlewood-Paley projections,
\begin{align*}
    \bigg\| \int_0^t e^{i(t-s)\Delta} &P_{\leq N}[F(u+w)(s) - F(u)(s)]~ds \bigg\|_{X^{s_c}([0,T))}\\
        &\leq \sup_{\|\tilde{v}\|_{Y^{-s_c}=1}} \bigg| \int_0^T\int_{\bb{T}^3} P_{\leq N}[F(u+w)(t) - F(u)(t)]\ol{\tilde{v}(t,x)}~dxdt\bigg|\\
        &= \sup_{\|\tilde{v}\|_{Y^{-s_c}=1}} \bigg| \int_0^T\int_{\bb{T}^3} [F(u+w)(t) - F(u)(t)]\ol{P_{\leq N}\tilde{v}(t,x)}~dxdt\bigg|
\end{align*}
We will prove the following estimate:
\begin{align*}\label{eqn:reduced_main_estimates}
    \bigg| \int_0^T\int_{\bb{T}^3} v(t,x)&[F(u+w)(t) - F(u)(t)]~dxdt\bigg|\\
        &\lesssim \|v\|_{Y^{-s_c}}\|w\|_{Y^{s_c}}(\|u\|_{Y^{s_c}} + \|w\|_{Y^{s_c}})^p. \numberthis
\end{align*}
Taking $v = \ol{P_{\leq N}\tilde{v}}$ and letting $N\to\infty$ then gives us Proposition \ref{prop:main_estimates}.

Let us give a brief outline of the proof of \eqref{eqn:reduced_main_estimates}.
This estimate is easiest to prove when $p$ is an even integer.
In this case, expanding $u$ and $w$ in terms of their Littlewood-Paley decompositions, we can write $F(u+w) - F(u)$ as a sum of products of frequency projections of $u,\ol{u},w,\ol{w}$, and emulate earlier arguments such as those in \cites{HeTaTz11,IoPa12,KiVi16}.
We will provide an explicit argument for $p=2$ in Section \ref{sec:cubic}.
This is also necessary, as some of our estimates for the case $p>2$ do not extend down to the endpoint.

The remainder of the paper treats $p>2$.
When $p$ is not an even integer, the above argument can no longer be carried out exactly.
However, the nonlinearity can be rewritten in a more manageable form using a paradifferential calculus technique known as the Bony linearization formula: we write $F(u) = F(u_{\leq 1}) + \sum_{N\geq 2} [F(u_{\leq N}) - F(u_{\leq\frac{N}{2}})]$, and use the fundamental theorem of calculus to rewrite the differences, obtaining expressions essentially of the form $F(u)\sim \sum_{N\geq 1} u_N|u_{\leq N}|^p$.
Iterating this sort of argument gives us an expression for $F(u+w) - F(u)$ that is essentially multilinear and treatable with known technology.

\subsection{Proof of Theorem \ref{theorem:main} for the cubic NLS}\label{sec:cubic}

We first consider the $H^{\frac{1}{2}}$-critical cubic NLS, the case $p=2$ of Theorem \ref{theorem:main}.
This proof is very similar to the proof of the contraction estimate for the $H^1$-critical cubic NLS on $\bb{T}^4$ in \cite{KiVi16}.
By the above discussion, to prove Theorem \ref{theorem:main} for $p=2$ it suffices to prove \eqref{eqn:reduced_main_estimates} for $p=2$.

\begin{proof}[Proof of \eqref{eqn:reduced_main_estimates} for $p=2$]
    Expanding $u$ and $w$ by their Littlewood-Paley decompositions and doing some combinatorics, \eqref{eqn:reduced_main_estimates} reduces to an estimate of the form
    \begin{equation}\label{eqn:cubic_main_estimate}
        \sum_{N_0 \geq 1} \sum_{N_1 \geq N_2 \geq N_3\geq 1} \bigg| \int_0^T \int_{\bb{T}^3} v_{N_0}u_{N_1}^{(1)}u_{N_2}^{(2)}u_{N_3}^{(3)} ~dxdt\bigg|
            \lesssim \|v\|_{Y^{-\frac{1}{2}}}\prod_{j=1}^3 \|u^{(j)}\|_{X^{\frac{1}{2}}},
    \end{equation}
    where $u^{(j)}$ are chosen from $\{u,\ol{u},w,\ol{w}\}$.
    In order for the integrals in \eqref{eqn:cubic_main_estimate} to be nonzero, the two highest frequencies must be comparable.
    Therefore the sum splits into two cases:
    \begin{enumerate}[1.]
        \item $N_0 \sim N_1 \geq N_2 \geq N_3$.
        We use the following idea which appeared in \cites{HeTaTz11,KiVi16}.
        Let $\bb{Z}^3 = \bigcup_j C_j$ be a partition of frequency space $\bb{Z}^3$ into cubes $C_j$ of side length $N_2$.
        We write $C_j \sim C_k$ if the sum set $C_j + C_k$ overlaps the Fourier support of $P_{\leq 2N_2}$.
        For a given $C_k$, there are a bounded number (independent of $k$ and $N_2$) of $C_j$ such that $C_j \sim C_k$.
        By H\"older, Strichartz, and summing via Cauchy-Schwarz, we estimate:
        \begin{align*}
            & \sum_{N_0 \sim N_1 \geq N_2 \geq N_3\geq 1} \bigg| \int_0^T \int_{\bb{T}^3} v_{N_0}u_{N_1}^{(1)}u_{N_2}^{(2)}u_{N_3}^{(3)} ~dxdt\bigg|\\
            &= \sum_{N_0\sim N_1 \geq N_2\geq N_3} \sum_{C_j\sim C_k} \left| \int_0^T\int_{\bb{T}^3} (P_{C_j}v_{N_0})(P_{C_k}u_{N_1}^{(1)})u_{N_2}^{(2)}u_{N_3}^{(3)}~dxdt\right| \\
                &\leq \sum_{N_0\sim N_1 \geq N_2\geq N_3} \sum_{C_j\sim C_k} \|P_{C_j}v_{N_0}\|_{L_{t,x}^{18/5}} \|P_{C_k}u_{N_1}^{(1)}\|_{L_{t,x}^{18/5}} \|u_{N_2}^{(2)}\|_{L_{t,x}^{18/5}} \|u_{N_3}^{(3)}\|_{L_{t,x}^6}\\
                &\lesssim \sum_{N_0\sim N_1 \geq N_2\geq N_3} \sum_{C_j\sim C_k} N_2^{\frac{1}{3}}N_3^{\frac{2}{3}}\|P_{C_j}v_{N_0}\|_{Y^0} \|P_{C_k}u_{N_1}^{(1)}\|_{Y^0} \|u_{N_2}^{(2)}\|_{Y^0} \|u_{N_3}^{(3)}\|_{Y^0}\\
                &\lesssim \sum_{N_0\sim N_1 \geq N_2\geq N_3} \sum_{C_j\sim C_k} \left(\frac{N_3}{N_2}\right)^\frac{1}{6}\|P_{C_j}v_{N_0}\|_{Y^{-\frac{1}{2}}} \|P_{C_k}u_{N_1}^{(1)}\|_{Y^{\frac{1}{2}}} \|u_{N_2}^{(2)}\|_{Y^{\frac{1}{2}}} \|u_{N_3}^{(3)}\|_{Y^{\frac{1}{2}}}\\
                &\lesssim \|v\|_{Y^{-\frac{1}{2}}}\prod_{j=1}^3 \|u^{(j)}\|_{Y^{\frac{1}{2}}}.
        \end{align*}
        From here \eqref{eqn:cubic_main_estimate} follows from the embedding $X^s\incl Y^s$.
        \item $N_0 \leq N_1 \sim N_2 \geq N_3$.
        For this sum we can estimate with just H\"older, Strichartz, and Cauchy-Schwarz:
        \begin{align*}
            &\sum_{N_0 \leq N_1 \sim N_2 \geq N_3\geq 1} \bigg| \int_0^T \int_{\bb{T}^3} v_{N_0}u_{N_1}^{(1)}u_{N_2}^{(2)}u_{N_3}^{(3)} ~dxdt\bigg|\\
                &\lesssim \sum_{N_0 \leq N_1 \sim N_2 \geq N_3\geq 1} \|v_{N_0}\|_{L_{t,x}^{18/5}}\|u_{N_1}^{(1)}\|_{L_{t,x}^{18/5}}\|u_{N_2}^{(2)}\|_{L_{t,x}^{18/5}}\|u_{N_3}^{(3)}\|_{6}\\
                &\lesssim \sum_{N_0 \leq N_1 \sim N_2 \geq N_3\geq 1} N_0^{\frac{1}{9}}N_1^{\frac{1}{9}}N_2^{\frac{1}{9}}N_3^{\frac{2}{3}}\|v_{N_0}\|_{Y^0}\|u_{N_1}^{(1)}\|_{Y^0}\|u_{N_2}^{(2)}\|_{Y^0}\|u_{N_3}^{(3)}\|_{Y^0}\\
                &\sim \sum_{N_0 \leq N_1 \sim N_2 \geq N_3\geq 1} \frac{N_0^{\frac{11}{18}}N_3^{\frac{1}{6}}}{N_1^{\frac{7}{18}}N_2^{\frac{7}{18}}}\|v_{N_0}\|_{Y^{-\frac{1}{2}}}\|u_{N_1}^{(1)}\|_{Y^{\frac{1}{2}}}\|u_{N_2}^{(2)}\|_{Y^{\frac{1}{2}}}\|u_{N_3}^{(3)}\|_{Y^{\frac{1}{2}}}\\
                &\lesssim \|v\|_{Y^{-\frac{1}{2}}}\|u\|_{Y^{\frac{1}{2}}}\sum_{N_1 \sim N_2} \left(\frac{N_1}{N_2}\right)^\frac{2}{9}\|u_{N_1}^{(1)}\|_{Y^{\frac{1}{2}}}\|u_{N_2}^{(2)}\|_{Y^{\frac{1}{2}}}\\
                &\lesssim \|v\|_{Y^{-\frac{1}{2}}}\prod_{j=1}^3 \|u^{(j)}\|_{Y^{\frac{1}{2}}}. \qedhere
        \end{align*}
    \end{enumerate}
\end{proof}
\begin{remark}
    Essentially the same proof will also give the contraction estimate for $p = 4,6,8,\ldots$.
    We can even take the same H\"older exponents for the four highest frequencies; any remaining lower-frequency terms can be placed in $L_{t,x}^\infty$, and a similar argument as above will establish the claim.
\end{remark}

\subsection{Paradifferential linearization}\label{sec:nonlinearity_decomposition}

We now extend the argument of the previous section to the case of general $p>2$.
Our main tool is the following paradifferential formula, which appeared in \cite{Bo81} with alternative hypotheses.
See \cite{Taylor00.tools} for a textbook treatment of this subject.
\begin{proposition}[Bony linearization formula]\label{prop:Bony_linearization}
    Let $g\in H^{s_c}(\bb{T}^d)$, $d\geq 3$, and let $F(z) = |z|^pz$ for $z\in\bb{C}$, $p\geq 0$.
    Then for all $1 \leq q < \frac{d}{2}$, we have
    \begin{equation}\label{eqn:LP_sum}
        F(g)
            = \lim_{N\to\infty} F(g_{\leq N}) 
            = \lim_{N\to\infty} F(g_{\leq 1}) + \sum_{2\leq M\leq N} [F(g_{\leq M}) - F(g_{\leq \frac{M}{2}})],
    \end{equation}
    where the limit is in the $L^q(\bb{T}^d)$-topology.
\end{proposition}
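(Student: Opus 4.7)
My plan is to reduce the claim to showing $F(g_{\leq N}) \to F(g)$ in $L^q(\mathbb{T}^d)$; the second equality in \eqref{eqn:LP_sum} is then just the telescoping identity
\[
    F(g_{\leq N}) = F(g_{\leq 1}) + \sum_{2\leq M\leq N}\bigl[F(g_{\leq M}) - F(g_{\leq M/2})\bigr].
\]

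The workhorse estimate is the elementary bound $|F(a) - F(b)| \lesssim |a - b|(|a|^p + |b|^p)$ for $a,b \in \mathbb{C}$, obtained by writing $F(a) - F(b) = \int_0^1 \tfrac{d}{dt} F(b + t(a - b))\,dt$ and noting $|\partial F(z)| \lesssim |z|^p$ (with $\partial$ real derivatives on $\mathbb{C} \simeq \mathbb{R}^2$). Applying this pointwise followed by H\"older yields
\[
    \|F(g_{\leq N}) - F(g)\|_{L^q} \lesssim \|g_{\leq N} - g\|_{L^r}\bigl(\|g_{\leq N}\|_{L^{ps}}^p + \|g\|_{L^{ps}}^p\bigr)
\]
for any H\"older pair $\tfrac{1}{q} = \tfrac{1}{r} + \tfrac{1}{s}$.

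Next I would invoke the critical Sobolev embedding $H^{s_c}(\mathbb{T}^d) \hookrightarrow L^{pd/2}(\mathbb{T}^d)$, which is precisely the scaling identity $s_c = d/2 - 2/p$, together with the standard tail bound $\|g - g_{\leq N}\|_{H^{s_c}}^2 = \sum_{M>N}\|P_M g\|_{H^{s_c}}^2 \to 0$, to conclude $g_{\leq N} \to g$ in $L^r(\mathbb{T}^d)$ for every $1 < r \leq pd/2$. Taking $r = pd/2$ and $s = d/2$ (so that $ps = pd/2$ and $q = pd/(2(p+1))$), the second factor in the displayed inequality is uniformly bounded by $\|g\|_{L^{pd/2}}^p$ via the $L^{pd/2}$-boundedness of $P_{\leq N}$, while the first factor tends to zero. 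This establishes convergence in $L^{pd/(2(p+1))}(\mathbb{T}^d)$, and hence in any smaller $L^q$ via the finite-measure inclusion $L^{q'} \hookrightarrow L^q$ on $\mathbb{T}^d$.

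The main bookkeeping concern is matching this to the advertised range $q < d/2$. The H\"older argument naturally produces convergence up to $q = pd/(2(p+1))$, which is already strictly below $d/2$ whenever $p>0$; for intermediate $q$ one can complement the $L^{pd/(2(p+1))}$ convergence with a uniform higher-integrability bound on $F(g_{\leq N})$ coming from Proposition \ref{prop:frac_chain} and interpolate, though for the downstream use only convergence in some fixed $L^q$ is actually needed. I do not anticipate any genuine obstacle: the whole statement is essentially the continuity of the Nemytskii map $g \mapsto |g|^p g$ between the relevant Lebesgue spaces.
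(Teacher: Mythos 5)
Your argument is correct and essentially the same as the paper's: the pointwise bound $|F(a)-F(b)|\lesssim|a-b|(|a|^p+|b|^p)$, H\"older's inequality, the Sobolev embedding $H^{s_c}\hookrightarrow L^{dp/2}$, and convergence $g_{\leq N}\to g$ in the relevant Lebesgue space. Your bookkeeping correctly isolates the natural range $q\leq pd/(2(p+1))$ produced by this route rather than the full stated $q<d/2$; the paper's own derivation has the same implicit restriction (it needs $dq/(d-2q)\leq dp/2$ to conclude $g_{\leq N}\to g$ in $L^{dq/(d-2q)}$), and as you observe only one fixed value of $q$ is used downstream, so this discrepancy is harmless.
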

\begin{proof}
    By the bound $|F(g) - F(h)| \lesssim |g-h|(|g|^p + |h|^p)$ and Sobolev embedding, we have
    \begin{align*}
        \|F(g) - F(g_{\leq N})\|_{L^q}
            &\lesssim \|g - g_{\leq N}\|_{L^{dq/(d-2q)}}(\|g\|_{L^{dp/2}}^p + \|g_{\leq N}\|_{L^{dp/2}}^p)\\
            &\lesssim \|g - g_{\leq N}\|_{L^{dq/(d-2q)}}(\|g\|_{H^{s_c}}^p + \|g_{\leq N}\|_{H^{s_c}}^p).
    \end{align*}
    Under the given hypotheses, $1 < \frac{dq}{d-2q} < \infty$.
    Therefore $g_{\leq N} \to g$ in $L^{dq/(d-2q)}$.
    So in the limit, we obtain
    \begin{equation*}
        \lim_{N\to\infty} \|F(g) - F(g_{\leq N})\|_{L^q}
            \lesssim \|g\|_{H^{s_c}}^{1/p}\lim_{N\to\infty} \|g-g_{\leq N}\|_{L^{dq/(d-2q)}} = 0. \qedhere
    \end{equation*}
\end{proof}
We combine Proposition \ref{prop:Bony_linearization} and a linearization of $F(h_{\leq N}) - F(h_{\leq\frac{N}{2}})$.
Using the identity
\begin{equation}\label{eqn:FTC}
    F(u+w)-F(u) = w \int_0^1 \pt_z F(u+\theta w)~d\theta + \ol{w} \int_0^1 \pt_{\ol{z}} F(u+\theta w)~d\theta,
\end{equation}
we obtain
\begin{align*}\label{eqn:FTC_LP}
    F(u_{\leq N}) - F(u_{\leq \frac{N}{2}})
        &= u_N\int_0^1 \pt_z F((P_{\leq\frac{N}{2}} + \theta P_N)u)~d\theta\\
        &~ + \ol{u_N}\int_0^1 \pt_{\ol{z}} F((P_{\leq\frac{N}{2}} + \theta P_N)u)~d\theta. \numberthis
\end{align*}
The above expression is essentially of the form $F(u_{\leq N}) - F(u_{\leq \frac{N}{2}}) \sim u_N|u_{\leq N}|^p$ for the purpose of estimation, and thus \eqref{eqn:reduced_main_estimates} is morally equivalent to an estimate like
\begin{align*}
    \sum_{N_0\geq 1} \sum_{N_1\geq 1} \int_0^T\int_{\bb{T}^3} v_{N_0}&[(u_N + w_N)|u_{\leq N} + w_{\leq N}|^p - u_N|u_{\leq N}|^p]~dxdt\\
        &\lesssim \|v\|_{Y^{-s_c}}\|w\|_{Y^{s_c}}(\|u\|_{Y^{s_c}} + \|w\|_{Y^{s_c}})^p.
\end{align*}
We now make this precise.
Under the convention $g_{\leq\frac{1}{2}} = 0$, Proposition \ref{prop:Bony_linearization} implies the weak convergence statement 
\[
    \sum_{N\geq 1} \langle F(g_{\leq N}) - F(g_{\leq\frac{N}{2}}),\vp\rangle_{L^2}  = \langle F(g),\vp\rangle_{L^2}
\]
for all $\vp\in C(\bb{T}^d)$.
We now apply this to the integral appearing in \eqref{eqn:reduced_main_estimates}.
Noting that $v_{N_0}(t)\in C(\bb{T}^d)$ for all $v\in Y^{-s_c}([0,T))$ and $1\leq N_0\in 2^\bb{Z}$, we obtain
\begin{align*}
    \int_0^T\int_{\bb{T}^3} v(t,x)&[F(u+w) - F(u)](t,x)~dxdt\\
        &= \sum_{N_0\geq 1} \int_0^T\int_{\bb{T}^3} v_{N_0}(t,x)[F(u+w) - F(u)](t,x)~dxdt\\
        &= \sum_{N_0\geq 1}\sum_{N_1\geq 1} \int_0^T\int_{\bb{T}^3} v_{N_0}[(F(u_{\leq N_1}+w_{\leq N_1}) - F(u_{\leq \frac{N_1}{2}} + w_{\leq\frac{N_1}{2}})\\
        &\hspace{10em} - (F(u_{\leq N_1})-F(u_{\leq \frac{N_1}{2}}))](t,x)~dxdt.
\end{align*}
We split the inner sum into three regimes: $N_0 \gg N_1$, $N_0 \ll N_1$, and $N_0\sim N_1$, so that \eqref{eqn:reduced_main_estimates} reduces to the following:
\begin{proposition}\label{prop:incomparable_frequencies}
    Fix $p \geq 2$.
    Let $0 < T \leq 1$.
    Then
    \begin{align*}
        &\bigg|\sum_{N_0\geq 1}\sum_{N_1\gg N_0\geq 1} \int_0^T\int_{\bb{T}^3} v_{N_0}[(F(u_{\leq N_1}+w_{\leq N_1}) - F(u_{\leq \frac{N_1}{2}} + w_{\leq\frac{N_1}{2}}))\\
        &\hspace{12em} - (F(u_{\leq N_1})-F(u_{\leq \frac{N_1}{2}}))](t,x)~dxdt\bigg|\\
        &\hspace{12em}\lesssim \|v\|_{Y^{-s_c}}\|w\|_{Y^{s_c}}(\|u\|_{Y^{s_c}} + \|w\|_{Y^{s_c}})^p
    \end{align*}
    and
    \begin{align*}
        &\bigg|\sum_{N_0\geq 1}\sum_{1 \leq N_1 \ll N_0} \int_0^T\int_{\bb{T}^3} v_{N_0}[(F(u_{\leq N_1}+w_{\leq N_1}) - F(u_{\leq \frac{N_1}{2}} + w_{\leq\frac{N_1}{2}}))\\
        &\hspace{12em} - (F(u_{\leq N_1})-F(u_{\leq \frac{N_1}{2}}))](t,x)~dxdt\bigg|\\
        &\hspace{12em}\lesssim \|v\|_{Y^{-s_c}}\|w\|_{Y^{s_c}}(\|u\|_{Y^{s_c}} + \|w\|_{Y^{s_c}})^p.
    \end{align*}
\end{proposition}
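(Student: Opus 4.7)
The plan is to express the difference of differences via two iterations of the fundamental theorem of calculus (following the Bony decomposition already built into the statement), reducing the integrand to an essentially multilinear form that can be treated by Strichartz, bilinear Strichartz, and nonlinear Bernstein. Write $g_\theta = (P_{\leq N_1/2}+\theta P_{N_1})(u+w)$ and $h_\theta = (P_{\leq N_1/2}+\theta P_{N_1})u$. Applying \eqref{eqn:FTC_LP} to each of the two bracketed Littlewood-Paley differences separately, the integrand $G_{N_1}(u,w)$ becomes
\begin{align*}
G_{N_1} = \int_0^1 \bigl[&w_{N_1}\,\partial_z F(g_\theta)+\bar{w}_{N_1}\,\partial_{\bar{z}}F(g_\theta) \\
&+ u_{N_1}(\partial_z F(g_\theta)-\partial_z F(h_\theta))+\bar{u}_{N_1}(\partial_{\bar{z}}F(g_\theta)-\partial_{\bar{z}}F(h_\theta))\bigr]\,d\theta.
\end{align*}
A further FTC in a parameter interpolating between $h_\theta$ and $g_\theta$ (whose difference is $w_{\leq N_1/2}+\theta w_{N_1}$) converts the parenthesized differences into integrals of $\partial^2 F$ against a factor of $w$ at frequency $\lesssim N_1$. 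The outcome is schematically
\[
G_{N_1} = w_{N_1}\cdot A(u,w) + (u_{N_1}\text{ or }w_{N_1})\cdot w_{\lesssim N_1}\cdot B(u,w),
\]
where $A$ is built from $\partial F$ and $B$ from $\partial^2 F$, each evaluated at arguments whose Fourier support lies in $\{|\xi|\lesssim N_1\}$.

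In the regime $N_1 \gg N_0$, I would pair $v_{N_0}$ with the highest-frequency explicit factor (either $w_{N_1}$ or $u_{N_1}$, at frequency $\sim N_1$) via the bilinear Strichartz estimate (Lemma \ref{lem:bilinear_Strichartz}), gaining a factor of $N_0^{1/2}$ in $L_{t,x}^2$. The remaining explicit factors and the nonlinear functionals $A, B$ go into suitable Strichartz norms, with the fractional chain rule (Proposition \ref{prop:frac_chain}) and Sobolev embedding used to dispose of the composition with $F$. The resulting bound carries a negative power of $N_1/N_0$ sufficient for the double sum to converge. In the opposite regime $N_1 \ll N_0$, all of the explicit $u,w$ factors in $G_{N_1}$ have Fourier support in $\{|\xi|\lesssim N_1\}$, so pairing with $v_{N_0}$ extracts only the piece of $A$ (or $B$) at frequency $\sim N_0$. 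I would then apply the nonlinear Bernstein inequality (Proposition \ref{prop:nonlinear_Bernstein}) to $P_{\sim N_0}\partial F(\cdot)$ and $P_{\sim N_0}\partial^2 F(\cdot)$, gaining a factor $(N_1/N_0)^\alpha$ for some $\alpha > 0$, and control the remaining factors via Strichartz before summing.

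The main obstacle is the $N_1 \ll N_0$ regime. Because $\partial F(z) \sim |z|^p$ and $\partial^2 F(z) \sim |z|^{p-1}$ are not globally H\"older continuous, applying Proposition \ref{prop:nonlinear_Bernstein} cleanly requires combining it with the fractional product and chain rules (Propositions \ref{prop:frac_product} and \ref{prop:frac_chain}) and Sobolev embedding, so as to transfer $L^r$ control of the inner arguments into $L^q$ control of the Littlewood-Paley projection of the nonlinear expression. Arranging a consistent choice of H\"older, Lebesgue, and Sobolev exponents that produces genuine decay in $N_0/N_1$ while respecting the critical scaling is the technical heart of the argument. This approach requires $p > 2$ so that $\partial^2 F$ has positive H\"older regularity, which is precisely why the endpoint $p = 2$ must be handled separately via the algebraic argument of Section \ref{sec:cubic}.
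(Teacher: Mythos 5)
Your paradifferential decomposition is exactly the paper's: two iterations of the FTC identity \eqref{eqn:FTC_LP}/\eqref{eqn:FTC} convert the integrand to the schematic form $w_{N_1}\cdot A + u_{N_1}\cdot w_{\lesssim N_1}\cdot B$ with $A\sim\partial F$ and $B\sim\partial^2F$, and this matches the paper's reduction to Proposition~\ref{prop:incomparable_frequencies_reduced}. The problems lie in how you then estimate the two frequency regimes.

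In the regime $N_1\gg N_0$, the only source of decay you invoke is the bilinear Strichartz bound on $\|v_{N_0}g_{N_1}\|_{L^2_{t,x}}\lesssim N_0^{1/2}\|v_{N_0}\|_{Y^0}\|g_{N_1}\|_{Y^0}$ plus the conversion to $Y^{\pm s_c}$ norms. Together these give a factor $N_0^{1/2+s_c}N_1^{-s_c}$, and the nonlinear factor $A$ (or $B$), placed in its natural critical Strichartz norm via Lemma~\ref{lem:critical_Strichartz}, contributes no further negative power of $N_1$. This leaves a residual $N_0^{1/2}$ after summing in $N_1\gg N_0$, so the double sum diverges. The paper's decisive extra step is the observation you invoke in the \emph{other} regime but not here: since $v_{N_0}$ and $g_{N_1}$ sit at frequencies $N_0\ll N_1$, the zero-mode constraint forces the nonlinear factor to be supported at frequency $\sim N_1$. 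Inserting $P_{\sim N_1}$ and applying (linear) Bernstein trades the projection for a gradient at cost $N_1^{-1}$; then Lemma~\ref{lem:derivative_estimates} (the pointwise bound $|\nabla G(h)|\lesssim|h|^{p-1}|\nabla h|$) and Lemma~\ref{lem:Strichartz_derivatives} (giving $\|\nabla h_{\leq N_1}\|_{L^{10p/(p+4)}_{t,x}}\lesssim N_1^{1/2}\|h_{\leq N_1}\|_{Y^{s_c}}$) recover only $N_1^{1/2}$, netting the crucial $N_1^{-1/2}$. Without this mechanism your bound does not sum.

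In the regime $N_1\ll N_0$ you correctly isolate $P_{\sim N_0}$ of the nonlinear factor, but your proposed tool --- the nonlinear Bernstein inequality (Proposition~\ref{prop:nonlinear_Bernstein}) applied to $\partial F$ and $\partial^2 F$ --- does not apply directly because those functions are not globally H\"older, as you yourself note. You then defer the resolution (``arranging a consistent choice of ... exponents ... is the technical heart''), which is precisely the gap. The paper avoids this entirely: it uses \emph{linear} Bernstein to convert $P_{\sim N_0}$ into $N_0^{-k}$ times $k$ genuine derivatives ($k=1$ for $2<p<4$, $k=2$ for $p\geq 4$), bounds those derivatives pointwise by Lemma~\ref{lem:derivative_estimates}, and then estimates the resulting gradient/Laplacian of $h_{\leq N_1}$ via Lemma~\ref{lem:Strichartz_derivatives}. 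You also omit the split between $2<p<4$ (one derivative suffices since $s_c<1$) and $p\geq 4$ (two derivatives needed since $s_c$ may exceed $1$), which is essential to making the exponent of $N_1/N_0$ strictly positive in all cases. The nonlinear Bernstein inequality does appear in the paper, but only in Proposition~\ref{prop:comparable_frequencies_reduced_p_2_to_3} in the $N_0\sim N_1$ regime, where it is applied to the genuinely H\"older function $|z|^{p-2}$ with $2<p<3$, not to $\partial F$ itself.
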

\begin{proposition}\label{prop:comparable_frequencies}
    Fix $p \geq 2$.
    Let $0 < T \leq 1$.
    Then
    \begin{align*}
        &\bigg|\sum_{N_0\geq 1}\sum_{N_0\sim N_1 \geq 1} \int_0^T\int_{\bb{T}^3} v_{N_0}[(F(u_{\leq N_1}+w_{\leq N_1}) - F(u_{\leq \frac{N_1}{2}} + w_{\leq\frac{N_1}{2}}))\\
        &\hspace{12em} - (F(u_{\leq N_1})-F(u_{\leq \frac{N_1}{2}}))](t,x)~dxdt\bigg|\\
        &\hspace{12em}\lesssim \|v\|_{Y^{-s_c}}\|w\|_{Y^{s_c}}(\|u\|_{Y^{s_c}} + \|w\|_{Y^{s_c}})^p.
    \end{align*}
\end{proposition}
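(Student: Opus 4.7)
The plan is to insert the fundamental theorem of calculus identity \eqref{eqn:FTC_LP} into each of the inner differences. Writing
\[
    g_\theta = (P_{\leq \frac{N_1}{2}} + \theta P_{N_1})u, \qquad h_\theta = (P_{\leq \frac{N_1}{2}} + \theta P_{N_1})(u+w),
\]
the bracketed quantity inside the integral becomes a $\theta$-average of
\[
    w_{N_1}\,F_z(h_\theta) \;+\; u_{N_1}\bigl[F_z(h_\theta) - F_z(g_\theta)\bigr]
\]
plus the complex conjugate analogue with $F_{\bar z}$. The point is that, paired against $v_{N_0}$ with $N_0\sim N_1$, the summand always carries one factor localized near frequency $N_1$ (either $w_{N_1}$ or $u_{N_1}$) together with the high-frequency test piece $v_{N_0}$. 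All remaining factors live at frequencies $\le N_1$, and the two terms can be treated by parallel strategies.

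For the $w_{N_1} F_z(h_\theta)$ piece, I would expand $F_z(h_\theta) \sim |h_\theta|^p$ via its Littlewood–Paley components, so that the whole integrand is schematically $v_{N_0} w_{N_1}$ times a product of $p$ pieces of $u,\bar u,w,\bar w$ at frequencies $\le N_1$. I would then cube-decompose $v_{N_0}$ and $w_{N_1}$ into cubes of side length equal to the third-highest frequency, pair adjacent cubes just as in Section \ref{sec:cubic}, and apply the bilinear Strichartz estimate of Lemma \ref{lem:bilinear_Strichartz} to this pair. The remaining $p$ lower-frequency factors would be placed in scale-invariant Strichartz norms $L^q_{t,x}$ with $q>\tfrac{10}{3}$ via Theorem \ref{theorem:scale_invariant_Strichartz}. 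With the Hölder exponents chosen so that the derivative bookkeeping matches $-s_c$ on $v$ and $s_c$ on each of the $p+1$ nonlinear factors, the diagonal sum $N_0\sim N_1$ and the sums over lower frequencies should close through a Schur-type estimate, leveraging a saving of the form $(N_{\mathrm{lo}}/N_{\mathrm{hi}})^\varepsilon$ from the bilinear step.

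For the $u_{N_1}[F_z(h_\theta) - F_z(g_\theta)]$ piece, the bound
\[
    |F_z(h_\theta) - F_z(g_\theta)| \lesssim |w_{\leq N_1}|\bigl(|u_{\leq N_1}|^{p-1}+|w_{\leq N_1}|^{p-1}\bigr),
\]
valid because $p\ge 2$ makes $F_z$ Lipschitz on bounded sets (or via the fractional chain rule, Proposition \ref{prop:frac_chain}), reduces the integrand to $v_{N_0}\,u_{N_1}\cdot w_{\leq N_1}\cdot(\text{lower-frequency }u,w)^{p-1}$. This has the same structural form as in the previous case—one high-high bilinear pair, one factor of $w$, and $p-1$ tail factors—and is handled by the same combination of bilinear Strichartz on $(v_{N_0},u_{N_1})$ together with scale-invariant Strichartz on the tails.

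The main obstacle I expect is organizational: choosing the Hölder exponents so that every slot is strictly above the Strichartz threshold $\tfrac{2(d+2)}{d}=\tfrac{10}{3}$ while simultaneously producing a summable power of each frequency ratio, and doing this uniformly in $p\ge 2$ rather than for one specific integer power. The subtlest configurations are those in which the $p$ factors coming from $F_z(h_\theta)$ have widely separated frequencies: these must be resummed either with Cauchy–Schwarz on their individual $Y^{s_c}$ norms or, when a factor is localized at frequency much below $N_1$, via a nonlinear Bernstein bound (Proposition \ref{prop:nonlinear_Bernstein}) to deposit it in $L^\infty_{t,x}$ without losing logarithms. Since $p\ge 2$ gives us the Lipschitz bound on $F_z$ for free, no second iteration of the Bony linearization is needed in this case.
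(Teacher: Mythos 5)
There is a genuine gap, and it is precisely the difficulty that this proposition is designed to surmount. After one application of \eqref{eqn:FTC_LP} you are left with residual factors such as $F_z(h_\theta)$ and $F_z(h_\theta)-F_z(g_\theta)$ living at frequencies $\lesssim N_1$, which is the same scale as both $v_{N_0}$ and $w_{N_1}$. Your plan to ``expand $F_z(h_\theta) \sim |h_\theta|^p$ via its Littlewood--Paley components'' into a product of $p$ localized pieces is exactly what cannot be done for non-integer $p$ (and is the whole point of the paradifferential machinery); for non-integer $p$ there is no such multilinear expansion. If instead you keep $F_z(h_\theta)$ intact and dump it into $L^\infty_{t,x}$, the supercritical Strichartz bound \eqref{eqn:supercritical_Strichartz} costs a factor on the order of $N_1^{2}$, which is not compensated by anything: the bilinear Strichartz estimate on the pair $(v_{N_0}, w_{N_1})$ gains $N_{\min}^{1/2}$, but since $N_0\sim N_1$ this is $N_1^{1/2}$, a loss rather than a gain. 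The cube decomposition of Section \ref{sec:cubic} only helps once there is a third genuinely lower frequency $N_2$ to set the cube side length, and that requires the remaining factors to be supported at frequencies $\lesssim N_2$; your residual factor is not.

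Your closing remark that ``no second iteration of the Bony linearization is needed'' is therefore the crux of what goes wrong. In the paper, the linearization \eqref{eqn:FTC_LP} is iterated a further two or three times in the $N_0\sim N_1$ regime, producing a genuinely quadrilinear main term $v_{N_0}u^{(1)}_{N_1}u^{(2)}_{N_2}u^{(3)}_{N_3}$ together with a residual $D_{N_3}F(h_{\leq N_3})$ whose frequency support is now controlled by $N_3$, the \emph{lowest} scale. Bilinear Strichartz is then applied to the cross pairs $(v_{N_0},u^{(2)}_{N_2})$ and $(u^{(1)}_{N_1},u^{(3)}_{N_3})$, which do carry genuine frequency gaps, and the residual can safely be estimated in $L^\infty_{t,x}$ (or split into low/high pieces with nonlinear Bernstein when $2<p<3$, where a third derivative of $F$ is H\"older but a fourth does not exist). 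Your use of the Lipschitz bound on $F_z$ for the $u_{N_1}[F_z(h_\theta)-F_z(g_\theta)]$ piece still leaves a residual $|h_{\leq N_1}|^{p-1}$ at frequency $\leq N_1$, so the same obstruction recurs. To repair the argument you would need to replace the single-step linearization by the iterated version of Propositions \ref{prop:comparable_frequencies_reduced_p_geq_3} and \ref{prop:comparable_frequencies_reduced_p_2_to_3}.
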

We treat each of these separately.
The integrals in Proposition \ref{prop:comparable_frequencies} can be controlled essentially as they are, while those in Proposition \ref{prop:incomparable_frequencies} are still difficult to estimate and require further linearization using Proposition \ref{prop:Bony_linearization} and Equation \eqref{eqn:FTC_LP}.

\subsection{Controlling sums over incomparable frequencies}
In this subsection we prove Proposition \ref{prop:incomparable_frequencies}.
We first express the integrands in Proposition \ref{prop:incomparable_frequencies} in a more manageable form.
Linearizing via \eqref{eqn:FTC_LP}, we write
\begin{align*}
    &(F(u_{\leq N}+w_{\leq N}) - F(u_{\leq \frac{N}{2}} + w_{\leq\frac{N}{2}})) - (F(u_{\leq N})-F(u_{\leq \frac{N}{2}}))\\
    &= w_N\int_0^1 \pt_z F((P_{\leq\frac{N}{2}} + \theta P_N)(u+w) ~d\theta\\
    &~~~~+ \ol{w_N}\int_0^1 \pt_{\ol{z}} F((P_{\leq\frac{N}{2}} + \theta P_N)(u+w))~d\theta\\
    &~~~~+u_N\int_0^1 [\pt_zF((P_{\leq\frac{N}{2}}+\theta P_N)(u+w)) - \pt_zF((P_{\leq\frac{N}{2}}+\theta P_N)u)]~d\theta\\
    &~~~~+ \ol{u_N}\int_0^1 [\pt_{\ol{z}}F((P_{\leq\frac{N}{2}}+\theta P_N)(u+w)) - \pt_{\ol{z}}F((P_{\leq\frac{N}{2}}+\theta P_N)u)]~d\theta.
\end{align*}
In the last two terms, we may linearize the difference again using \eqref{eqn:FTC}, obtaining:
\begin{align*}
    &u_N\int_0^1 [\pt_zF((P_{\leq\frac{N}{2}}+\theta P_N)(u+w)) - \pt_zF((P_{\leq\frac{N}{2}}+\theta P_N)u)]~d\theta\\
        &= u_N\int_0^1 (P_{\leq\frac{N}{2}}+\theta P_N)w \int_0^1 \pt_z^2F((P_{\leq\frac{N}{2}}+\theta P_N)(u+\eta w))]~d\eta d\theta\\
        &+ u_N\int_0^1 \ol{(P_{\leq\frac{N}{2}}+\theta P_N)w} \int_0^1 \pt_{\ol{z}}\pt_zF((P_{\leq\frac{N}{2}}+\theta P_N)(u+\eta w))]~d\eta d\theta
\end{align*}
and similarly for the term involving $\ol{u_N}$.
We summarize these calculations in the form
\begin{align*}\label{eqn:linearization_iteration_1}
    &(F(u_{\leq N_1}+w_{\leq N_1}) - F(u_{\leq \frac{N_1}{2}} + w_{\leq\frac{N_1}{2}})) - (F(u_{\leq N_1})-F(u_{\leq \frac{N_1}{2}}))\\
        &= w_{N_1}\int_0^1 \pt_z F((P_{\leq\frac{N_1}{2}} + \theta P_{N_1})(u+w) ~d\theta + ~\tnm{similar terms}\\
        &+ u_{N_1}\int_0^1 (P_{\leq\frac{N_1}{2}}+\theta P_N)w \int_0^1 \pt_z^2F((P_{\leq\frac{N_1}{2}}+\theta P_{N_1})(u+\eta w))]~d\eta d\theta\\
        &+ ~\tnm{similar terms}. \numberthis
\end{align*}
where by ``similar terms'' we indicate the same expression up to complex conjugates and conjugate derivatives in the appropriate places.

We now recall our heuristic: this expression is morally of the form
\begin{align*}
    &(F(u_{\leq N_1}+w_{\leq N_1}) - F(u_{\leq \frac{N_1}{2}} + w_{\leq\frac{N_1}{2}})) - (F(u_{\leq N_1})-F(u_{\leq \frac{N_1}{2}}))\\
        &\sim w_{N_1}\pt_zF(u_{\leq N_1} + w_{\leq N_1}) + u_{N_1}w_{\leq N_1}\pt_z^2F(u_{\leq N_1} + w_{\leq N_1})
\end{align*}
Inspired by this, we claim that Proposition \ref{prop:comparable_frequencies} follows from the following:
\begin{proposition}\label{prop:incomparable_frequencies_reduced}
    Fix $p \geq 2$.
    Let $0 < T \leq 1$.
    Then
    \begin{align*}
       \sum_{N_0\gg N_1 \geq 1} \bigg|\int_0^T\int_{\bb{T}^3} &v_{N_0}g_{N_1}D_{N_1}F(h_{\leq N_1})~dxdt\bigg|\\
            &\lesssim \|v\|_{Y^{-s_c}}\|w\|_{Y^{s_c}}\max\{\|g\|_{Y^{s_c}}, \|h\|_{Y^{s_c}}\}\|h\|_{Y^{s_c}}^{p-1}
    \end{align*}
    and
    \begin{align*}
        \sum_{N_0\ll N_1 \geq 1} \bigg|\int_0^T\int_{\bb{T}^3} &v_{N_0}g_{N_1}D_{N_1}F(h_{\leq N_1})~dxdt\bigg|\\
             &\lesssim \|v\|_{Y^{-s_c}}\|w\|_{Y^{s_c}}\max\{\|g\|_{Y^{s_c}}, \|h\|_{Y^{s_c}}\}\|h\|_{Y^{s_c}}^{p-1}.
     \end{align*}
    where $g \in\{u,w\}$, $D_{N_1}\in\{\pt_z,\pt_{\ol{z}}\}$ if $g=w$, and
    \[
        D_{N_1}\in\{w_{\leq N_1}\pt_z^2,w_{\leq N_1}\pt_z\pt_{\ol{z}},w_{\leq N_1}\pt_{\ol{z}}^2\}
    \]
    if $g = u$.
\end{proposition}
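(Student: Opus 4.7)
The plan is to estimate both sums by Hölder's inequality combined with the bilinear Strichartz estimate of Lemma \ref{lem:bilinear_Strichartz} applied to the pair $(v_{N_0}, g_{N_1})$ in $L^2_{t,x}$, and scale-invariant Strichartz (Theorem \ref{theorem:scale_invariant_Strichartz}) on the remaining factors. The two incomparable regimes $N_0 \gg N_1$ and $N_0 \ll N_1$ each produce a power-law gain in $\min(N_0, N_1)/\max(N_0, N_1)$ that permits dyadic summation, but the source of the gain is different.

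For $N_0 \gg N_1$, I would first exploit a frequency-support constraint: since $g_{N_1}$ is localized at frequencies $\sim N_1 \ll N_0$, the integral is unchanged upon replacing $D_{N_1} F(h_{\leq N_1})$ by its projection to frequencies $\gtrsim N_0$. Applying Proposition \ref{prop:nonlinear_Bernstein} to this high-frequency projection yields a gain of $(N_1/N_0)^{\alpha}$ for some $\alpha>0$ coming from the Hölder regularity of $\partial F$ (or of $\partial^2 F$, in the $g=u$ subcase); the hypothesis $p>2$ keeps $\alpha$ strictly positive. I would then apply Hölder with bilinear Strichartz on $v_{N_0} g_{N_1}$ in $L^2_{t,x}$, scale-invariant Strichartz on the lower-frequency factors, and the fractional chain and product rules (Propositions \ref{prop:frac_chain}, \ref{prop:frac_product}) to distribute the $s_c$ derivatives on $F(h_{\leq N_1})$ and recover the factors $\|h\|_{Y^{s_c}}^{p-1}$ and $\|w\|_{Y^{s_c}}$ on the right-hand side.

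For $N_0 \ll N_1$, a frequency-support reduction is not directly available, but there is an intrinsic gain of $N_0^{s_c}$ separating $\|v_{N_0}\|_{Y^0}$ from $\|v_{N_0}\|_{Y^{-s_c}}$. Following the strategy already used in Section \ref{sec:cubic}, I would partition $\bb{Z}^3$ into cubes of side length $N_0$, localize $v_{N_0}$ and $g_{N_1}$ to cubes whose sum sets are compatible, and pair them via bilinear Strichartz at the $N_0$-cube scale. The remaining factor $D_{N_1} F(h_{\leq N_1})$ is handled again by scale-invariant Strichartz and the fractional calculus. Once all exponents are balanced, the net dyadic weight should be a positive power of $N_0/N_1$, summable in both variables.

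The main technical obstacle is the careful choice of Hölder exponents so that (i) the scale-invariant Strichartz estimate applies (the exponent must lie strictly above $10/3$ in $d=3$), (ii) the fractional chain rule accommodates the chosen integrability on every factor of $F$, and (iii) the dyadic weights produced by nonlinear Bernstein, scale-invariant Strichartz, and the cube decomposition combine to a convergent geometric series uniformly in $p$. This last point is delicate because as $p\to 2^+$ the nonlinear Bernstein exponent $\alpha$ degrades and the scale-invariant Strichartz estimate approaches its endpoint, so the proof must exhibit enough slack in each regime to absorb small losses arising from the fractional calculus.
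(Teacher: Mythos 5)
Your high-level plan is on the right track for the regime $N_0 \gg N_1$ — project $D_{N_1}F(h_{\leq N_1})$ to frequencies $\sim N_0$ and pair $v_{N_0}g_{N_1}$ by bilinear Strichartz — but the mechanism you propose for extracting decay from that projection has a genuine problem. Proposition \ref{prop:nonlinear_Bernstein} applies to a \emph{globally} H\"older-continuous $G$, and $\pt_z F$ (respectively $\pt_z^2 F$) is homogeneous of degree $p$ (resp.\ $p-1$), hence unbounded, so it is not globally H\"older; the nonlinear Bernstein estimate as stated does not apply to it. The paper instead uses \emph{ordinary} Bernstein to convert $P_{\sim N_0}$ into a factor of $N_0^{-1}$ and a full gradient, then controls $\nabla\bigl(G(h_{\leq N_1})\bigr)$ by the pointwise bounds of Lemma \ref{lem:derivative_estimates} and the frequency-localized Strichartz estimate with a derivative, Lemma \ref{lem:Strichartz_derivatives}. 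More seriously, you miss that a single derivative is not enough for large $p$: the net gain from this argument is $(N_1/N_0)^{1-s_c}$, which is only summable for $s_c < 1$, i.e.\ $p < 4$. For $p \geq 4$ the paper takes a full Laplacian instead, producing $N_0^{-2}$ and a summable factor $(N_1/N_0)^{2-s_c}$. This case split in $p$ is essential and is absent from your outline, and it is exactly here that the H\"older-regularity of $\pt F$ that you gesture at cannot rescue the argument, because the exponent would degenerate precisely when you need more decay.

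For the regime $N_0 \ll N_1$, your instinct to reach for a cube decomposition is unnecessary and introduces an obstacle: the cube trick of Section \ref{sec:cubic} exploits two factors at the \emph{same} dyadic scale $N_1 \sim N_0$, whereas here $v_{N_0}$ sits in a single $N_0$-cube and nothing pairs with it at that scale. The paper handles this regime by the very same frequency-support observation — since $N_1$ is now the dominant frequency, $D_{N_1}F(h_{\leq N_1})$ can be replaced by $P_{\sim N_1}D_{N_1}F(h_{\leq N_1})$ — followed by the identical Bernstein-plus-derivative estimate with the gain $N_1^{-1}N_1^{1/2}=N_1^{-1/2}$, which combined with bilinear Strichartz yields $(N_0/N_1)^{1/2+s_c}$, summable for all $p > 2$ (and no further case split is needed). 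Finally, the fractional chain and product rules play no role in the paper's proof of this proposition; the pointwise estimates of Lemma \ref{lem:derivative_estimates} suffice because the derivative landing on $F$ is an honest gradient or Laplacian, not a fractional power.
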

First, let us see how this simplified estimate leads to Proposition \ref{prop:incomparable_frequencies}.
\begin{proof}[Sketch of Proposition \ref{prop:incomparable_frequencies} assuming Proposition \ref{prop:incomparable_frequencies_reduced}]
    By our work to this point, Proposition \ref{prop:incomparable_frequencies} can be proved by establishing the corresponding estimate for each term in Equation \eqref{eqn:linearization_iteration_1}.
    Suppose, for instance, that we wish to prove the estimate
    \begin{align*}
        \bigg| \sum_{N_0\gg N_1} \int_0^T \int_{\bb{T}^3} &v_{N_0}w_{N_1}\int_0^1 \pt_z F((P_{\leq\frac{N_1}{2}} + \theta P_{N_1})(u+w)) ~d\theta dxdt\bigg|\\
            &\lesssim \|v\|_{Y^{-s_c}}\|w\|_{Y^{s_c}}(\|u\|_{Y^{s_c}} + \|w\|_{Y^{s_c}})^p.
    \end{align*}
    Applying Fubini, it is sufficient to show that
    \begin{align*}
        \sum_{N_0\gg N_1} \int_0^1\bigg(\int_0^T \int_{\bb{T}^3} &|v_{N_0}w_{N_1} \pt_z F((P_{\leq\frac{N_1}{2}} + \theta P_{N_1})(u+w))| ~dxdt\bigg)~d\theta\\
            &\lesssim \|v\|_{Y^{-s_c}}\|w\|_{Y^{s_c}}(\|u\|_{Y^{s_c}} + \|w\|_{Y^{s_c}})^p.
    \end{align*}
    Take $g = w$, $D_{N_1} = \pt_z$, and $h = (P_{\leq \frac{N}{2}} + \theta P_N)(u+w)$.
    Then the first line in the above estimate is very nearly the expression that is estimated in Proposition \ref{prop:incomparable_frequencies_reduced}.
    As we will point out after the proof of Proposition \ref{prop:incomparable_frequencies_reduced}, the projection $P_{\leq \frac{N_1}{2}} + \theta P_{N_1}$ can be replaced by $P_{\leq N_1}$ by a simple argument, and consequently the integral in $\theta$ can be eliminated, leaving us with precisely the expression in Proposition \ref{prop:incomparable_frequencies_reduced}.
    The point is that Proposition \ref{prop:incomparable_frequencies_reduced} is fairly lenient when it comes to choosing the functions $g$ and $h$, in a way which will become apparent during its proof. See Remark \ref{rem:replacing_integrands}.
    We thus obtain
    \begin{align*}
        \sum_{N_0\gg N_1} \int_0^1\bigg(\int_0^T \int_{\bb{T}^3} &|v_{N_0}w_{N_1} \pt_z F((P_{\leq\frac{N_1}{2}} + \theta P_{N_1})(u+w))| ~dxdt\bigg)~d\theta\\
            &\lesssim \|v\|_{Y^{-s_c}}\|w\|_{Y^{s_c}}\max\{\|w\|_{Y^{s_c}}\|u+w\|_{Y^{s_c}}^{p-1},\|u+w\|_{Y^{s_c}}^p\}\\
            &\leq \|v\|_{Y^{-s_c}}\|w\|_{Y^{s_c}}(\|u\|_{Y^{s_c}} + \|w\|_{Y^{s_c}})^p.
    \end{align*}
    Lastly, to completely prove Proposition \ref{prop:incomparable_frequencies} we must obtain the analogous estimate for the remaining five terms in the expression \eqref{eqn:linearization_iteration_1}, and also the corresponding estimates for the sum over $N_0 \ll N_1$.
    A similar argument as above shows how to go from Proposition \ref{prop:incomparable_frequencies_reduced} to the required estimate in each case.
\end{proof}
Before we prove Proposition \ref{prop:incomparable_frequencies_reduced}, let us outline the proof.
The point of isolating the regimes $N_0\gg N_1$ and $N_0\ll N_1$ is that in these cases, we can further restrict the nonlinear factor $D_{N_1}F(h_{\leq N_1})$ to the highest frequencies; e.g. if $N_0\gg N_1$, we can replace $D_{N_1}F(h_{\leq N_1})$ with $P_{\sim N_0}(D_{N_1}F(h_{\leq N_1}))$ inside the integral.
We may then differentiate this term to obtain some extra decay in the highest frequency.
When $2 < p < 4$, the first and second derivatives of $F(z) = |z|^pz$ admit one full derivative, while when $p\geq 4$ they admit two full derivatives.
This produces enough decay to defeat the critical regularity: $s_c < 1$ for $2<p<4$, and $s_c < 2$ for $p\geq 4$.
We will therefore be able to obtain an estimate for 
\[
    \sum_{N_0\gg N_1 \geq 1} \bigg|\int_0^T\int_{\bb{T}^3} v_{N_0}g_{N_1}D_{N_1}F(h_{\leq N_1})~dxdt\bigg| 
\]
which we will be able to sum much like as in the proof of Proposition \ref{prop:main_estimates} for the cubic NLS.

We record some useful estimates before proceeding.
\begin{lemma}[Scaling-critical Strichartz estimate]\label{lem:critical_Strichartz}
    For $p > \frac{4}{3}$,
    \begin{equation}\label{eqn:critical_Strichartz}
        \|u\|_{L_{t,x}^{5p/2}([0,T)\times\bb{T}^3)} \lesssim \|u\|_{Y^{s_c}}.
    \end{equation}
    Also, if $r > \frac{5p}{2}$ with $p>\frac{4}{3}$, then
    \begin{equation}\label{eqn:supercritical_Strichartz}
        \|u_{\leq N}\|_{L_{t,x}^r([0,T)\times\bb{T}^3)} \lesssim N^{\frac{2}{p}-\frac{5}{r}}\|u_{\leq N}\|_{Y^{s_c}}.
    \end{equation}
\end{lemma}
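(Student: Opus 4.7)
The plan is to reduce both bounds to Theorem \ref{theorem:scale_invariant_Strichartz} applied at a single Littlewood--Paley scale, and then to sum over dyadic frequencies via a standard Littlewood--Paley square function argument.

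For \eqref{eqn:critical_Strichartz}, I would take $q = \frac{5p}{2}$ in Theorem \ref{theorem:scale_invariant_Strichartz} with $d=3$; note that the Strichartz hypothesis $q > \frac{2(d+2)}{d} = \frac{10}{3}$ is exactly equivalent to $p > \frac{4}{3}$. Since the Fourier support of a Littlewood--Paley piece $u_N$ is contained in a cube $C$ of side length $\sim N$, the theorem yields
\[
    \|u_N\|_{L_{t,x}^{5p/2}} \lesssim N^{\frac{3}{2} - \frac{5}{5p/2}}\|u_N\|_{Y^0} = N^{s_c}\|u_N\|_{Y^0} \sim \|u_N\|_{Y^{s_c}},
\]
which is precisely the statement that $s_c$ is the scaling-critical regularity for this Strichartz pair. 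To sum over $N$, I would invoke the $L^{5p/2}$ Littlewood--Paley square function estimate on $\bb{T}^3$ together with Minkowski's inequality (valid since $\frac{5p}{2} \geq 2$), obtaining
\[
    \|u\|_{L_{t,x}^{5p/2}} \lesssim \bigg(\sum_N \|u_N\|_{L_{t,x}^{5p/2}}^2\bigg)^{1/2} \lesssim \bigg(\sum_N \|u_N\|_{Y^{s_c}}^2\bigg)^{1/2} = \|u\|_{Y^{s_c}}.
\]

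For \eqref{eqn:supercritical_Strichartz}, the same strategy applies with $q = r > \frac{5p}{2} > \frac{10}{3}$. A Littlewood--Paley piece $u_M$ with $M \leq N$ satisfies
\[
    \|u_M\|_{L_{t,x}^r} \lesssim M^{\frac{3}{2} - \frac{5}{r}}\|u_M\|_{Y^0} = M^{\frac{2}{p} - \frac{5}{r}}\|u_M\|_{Y^{s_c}}.
\]
The exponent $\frac{2}{p} - \frac{5}{r}$ is strictly positive under the hypothesis $r > \frac{5p}{2}$, so when I square-sum over $M \leq N$ via Littlewood--Paley and Minkowski, the resulting geometric series is dominated by its largest term at $M \sim N$. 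Bounding $M^{\frac{2}{p} - \frac{5}{r}} \leq N^{\frac{2}{p} - \frac{5}{r}}$ then extracts the factor $N^{\frac{2}{p} - \frac{5}{r}}$ in front of $\|u_{\leq N}\|_{Y^{s_c}}$, as required.

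No serious obstacle arises here; the technical points are all standard. They consist of passing from the annular Fourier support of $u_N$ to a cube of comparable side length (which only affects the implicit constant), verifying the numerology $\frac{3}{2} - \frac{5}{q} = s_c$ at $q = \frac{5p}{2}$ to confirm scale invariance, and checking that $r, \frac{5p}{2} \geq 2$ so that Minkowski's inequality applies in the square function step.
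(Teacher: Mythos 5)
Your proof is correct and follows essentially the same route as the paper: square-function estimate plus Minkowski to reduce to single Littlewood--Paley pieces, Theorem \ref{theorem:scale_invariant_Strichartz} with $d=3$ applied scale by scale, and for \eqref{eqn:supercritical_Strichartz} the observation that the exponent $\frac{2}{p}-\frac{5}{r}$ is positive so the largest dyadic factor dominates the $\ell^2$ sum. The numerology checks ($\tfrac{3}{2}-\tfrac{5}{5p/2}=s_c$, the threshold $\tfrac{5p}{2}>\tfrac{10}{3}\iff p>\tfrac{4}{3}$, and $\tfrac{5p}{2}\geq 2$ for Minkowski) are the same ones the paper records.
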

\begin{proof}
    By the square-function estimate,
    \begin{align*}
        \|u\|_{L_{t,x}^{5p/2}}
            \sim \| (\sum_N |u_N|^2)^\frac{1}{2} \|_{L_{t,x}^{5p/2}}
            \leq ( \sum_N \| u_N \|_{L_{t,x}^{5p/2}}^2)^\frac{1}{2}.
    \end{align*}
    Here we have used $p > \frac{4}{3} > \frac{4}{5}$, which ensures that $\|\cdot\|_{L_{t,x}^{5p/4}}$ is a norm and not merely a quasinorm.
    The condition $p>\frac{4}{3}$ also ensures that $\frac{5p}{2} > \frac{10}{3}$.
    Therefore we may apply the Strichartz estimate and obtain
    \[
        ( \sum_N \| u_N \|_{L_{t,x}^{5p/2}}^2)^\frac{1}{2}
            \lesssim ( \sum_N N^{2s_c}\| u_N \|_{Y^0}^2)^\frac{1}{2}\sim \|u\|_{Y^{s_c}}.
    \]
    This proves \eqref{eqn:critical_Strichartz}.
    We proceed similarly for \eqref{eqn:supercritical_Strichartz}: by the square-function estimate, Strichartz, and Cauchy-Schwarz, we obtain
    \begin{align*}
        \|u_{\leq N}\|_{L_{t,x}^r}
            &\lesssim (\sum_{M\leq N} \|u_M\|_{L_{t,x}^r}^2)^\frac{1}{2}
            \lesssim (\sum_{M\leq N} (M^{\frac{2}{p}-\frac{5}{r}})^2\|u_M\|_{Y^{s_c}}^2 )^\frac{1}{2}\\
            &\lesssim N^{\frac{2}{p}-\frac{5}{r}}\|u_{\leq N}\|_{Y^{s_c}}. \qedhere
    \end{align*}
\end{proof}
\begin{lemma}\label{lem:Strichartz_derivatives}
    Let $p > 2$ and $0<T\leq 1$. Then
    \begin{equation}\label{eqn:Strichartz_derivatives_1}
        \|\nabla u_{\leq N}\|_{L_{t,x}^{\frac{10p}{p+4}}([0,T)\times\bb{T}^3)} \lesssim N^{\frac{1}{2}}\|u_{\leq N}\|_{Y^{s_c}([0,T))},
    \end{equation}
    \begin{equation}\label{eqn:Strichartz_derivatives_2}
        \|\nabla u_{\leq N}\|_{L_{t,x}^{\frac{20p}{p+8}}} \lesssim N^{\frac{3}{4}}\|u_{\leq N}\|_{Y^{s_c}},
    \end{equation}
    and
    \begin{equation}\label{eqn:Strichartz_derivatives_3}
        \|\Delta u_{\leq N}\|_{L_{t,x}^{\frac{10p}{p+4}}} \lesssim N^{\frac{3}{2}}\|u_{\leq N}\|_{Y^{s_c}}.
    \end{equation}
\end{lemma}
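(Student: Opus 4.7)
All three estimates have the same shape: a (fractional) derivative of a low-frequency piece is bounded in an $L_{t,x}^q$ norm with an $L^2$-dyadic-summed $Y^{s_c}$ norm on the right, with a gain in $N$ coming from Bernstein. The uniform strategy is: (i) square-function decompose the low-frequency piece $u_{\leq N}$ into its dyadic constituents $u_M$, $M \leq N$; (ii) use Minkowski to pull the $\ell^2_M$ sum outside the $L_{t,x}^q$ norm (valid for $q \geq 2$, which is easy to check in each case); (iii) bound $\||\nabla|^k u_M\|_{L^q}$ by $M^k \|u_M\|_{L^q}$ via Bernstein; (iv) apply the scale-invariant Strichartz estimate (Theorem \ref{theorem:scale_invariant_Strichartz}) to each $u_M$; (v) rewrite $\|u_M\|_{Y^0} = M^{-s_c}\|u_M\|_{Y^{s_c}}$; and (vi) collect the $M$-power and sum via Cauchy--Schwarz, losing only a power of the largest scale $N$.

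Concretely, for \eqref{eqn:Strichartz_derivatives_1} with $q = \tfrac{10p}{p+4}$, the combined Bernstein + Strichartz + regularity conversion yields an exponent
\[
    k + \tfrac{3}{2} - \tfrac{5}{q} - s_c \;=\; 1 + \tfrac{3}{2} - \tfrac{p+4}{2p} - \bigl(\tfrac{3}{2} - \tfrac{2}{p}\bigr) \;=\; \tfrac{1}{2},
\]
so $\|\nabla u_M\|_{L_{t,x}^q} \lesssim M^{1/2}\|u_M\|_{Y^{s_c}}$. The same bookkeeping with $q = \tfrac{20p}{p+8}$ gives exponent $3/4$, proving \eqref{eqn:Strichartz_derivatives_2}, and with $k=2$ (for $\Delta$) and $q = \tfrac{10p}{p+4}$ gives exponent $3/2$, proving \eqref{eqn:Strichartz_derivatives_3}. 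In every case, the final step is
\[
    \bigl\| |\nabla|^k u_{\leq N} \bigr\|_{L_{t,x}^q} \lesssim \Bigl( \sum_{M \leq N} M^{2\alpha} \|u_M\|_{Y^{s_c}}^2 \Bigr)^{1/2} \leq N^\alpha \Bigl( \sum_{M \leq N} \|u_M\|_{Y^{s_c}}^2 \Bigr)^{1/2} \lesssim N^\alpha \|u_{\leq N}\|_{Y^{s_c}},
\]
where $\alpha \in \{1/2, 3/4, 3/2\}$ as appropriate.

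The main technical constraint is the validity of the scale-invariant Strichartz estimate, which demands $q > \tfrac{10}{3}$. For $q = \tfrac{10p}{p+4}$ this is equivalent to $p > 2$, explaining precisely the strict hypothesis $p > 2$ in the lemma and why this step is the only genuine obstacle: everything else (Bernstein, Minkowski, Cauchy--Schwarz, the dyadic summation) holds for all $p \geq 1$ or so. For $q = \tfrac{20p}{p+8}$ the condition reduces to $p > 8/5$ and so is implied by $p > 2$. One minor care point is to verify $q \geq 2$ for Minkowski in each of the three cases, which is trivial ($p \geq 1$ suffices in all three). I would also remark, after the proof, that the endpoint $p = 2$ fails here because $q = \tfrac{10}{3}$ is exactly the forbidden Strichartz endpoint; this is the source of the limitation flagged in the introduction, and is what forces the cubic case $p = 2$ to be handled separately in Section \ref{sec:cubic}.
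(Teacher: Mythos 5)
Your proof is correct and takes essentially the same approach as the paper: Bernstein to produce the derivative power, the scale-invariant Strichartz estimate on each dyadic piece $u_M$, and a dyadic summation step to convert $\ell^2_M$-type bounds into $N^\alpha \|u_{\leq N}\|_{Y^{s_c}}$. The only cosmetic difference is that the paper applies the triangle inequality $\|\nabla u_{\leq N}\|_{L^q} \leq \sum_{M\leq N}\|\nabla u_M\|_{L^q}$ and closes with Cauchy--Schwarz, whereas you pass through the square-function estimate and Minkowski (valid since $q \geq 2$) before taking the sup over $M^\alpha$; both are standard and interchangeable here.
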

\begin{proof}
   Note that $\frac{10p}{p+4} > \frac{10}{3}$ for $p> 2$. Applying Bernstein, Strichartz, and Cauchy-Schwarz,
    \begin{align*}
        \|\nabla u_{\leq N}\|_{L_{t,x}^{\frac{10p}{p+4}}}
            &\leq \sum_{M\leq N}\|\nabla u_M\|_{L_{t,x}^{\frac{10p}{p+4}}}
            \sim \sum_{M\leq N} M\|u_M\|_{L_{t,x}^{\frac{10p}{p+4}}}
            \lesssim \sum_{M\leq N}M^{\frac{1}{2}}\|u_M\|_{Y^{s_c}}\\
            &\lesssim N^\frac{1}{2}\|u_{\leq N}\|_{Y^{s_c}}.
    \end{align*}
    The other two estimates are proved similarly.
\end{proof}
\begin{remark}
    Since the Strichartz estimate at the $L_{t,x}^{10/3}$ endpoint only holds with a derivative loss, \eqref{eqn:Strichartz_derivatives_1} and \eqref{eqn:Strichartz_derivatives_3} do not hold for $p=2$.
    This is one reason why we have provided a separate argument for the cubic NLS.
\end{remark}
\begin{lemma}\label{lem:derivative_estimates}
    Let $u\in C^1(\bb{T}^d)$.
    If $p\geq 2$ and $G\in\{\pt_zF,\pt_{\ol{z}}F\}$, then
    \[
        |\nabla (G(u(x)))| \lesssim_p |u(x)|^{p-1}|\nabla u(x)|.
    \]
    and
    \[
        |\Delta G(u(x))| \lesssim_p |u(x)|^{p-1}|\Delta u(x)| + |u(x)|^{p-2}|\nabla u(x)|^2. 
    \]
    If $p\geq 3$ and $G\in\{\pt_z^2F,\pt_z\pt_{\ol{z}}F,\pt_{\ol{z}}^2F\}$, then
    \[
        |\nabla (G(u(x)))| \lesssim_p |u(x)|^{p-2}|\nabla u(x)|
    \]
    and
    \[
        |\Delta (G(u(x)))| \lesssim_p |g(x)|^{p-2}|\Delta u(x)| + |u(x)|^{p-3}|\nabla u(x)|^2.    
    \]
\end{lemma}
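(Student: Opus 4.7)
The plan is to reduce everything to the chain rule and Leibniz rule in Wirtinger calculus, then insert explicit pointwise bounds on the complex derivatives of $F(z) = |z|^p z$.

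First I would establish the pointwise bounds on derivatives of $F$. Writing $F(z) = (z\bar z)^{p/2} z$ and differentiating gives, e.g.,
\[
    \pt_z F(z) = (1 + \tfrac{p}{2})|z|^p, \qquad \pt_{\ol z}F(z) = \tfrac{p}{2}|z|^{p-2}z^2,
\]
\[
    \pt_z^2 F = \tfrac{p(p+2)}{4}|z|^{p-2}\ol z,\quad \pt_z\pt_{\ol z}F = \tfrac{p(p+2)}{4}|z|^{p-2}z,\quad \pt_{\ol z}^2 F = \tfrac{p(p-2)}{4}|z|^{p-4}z^3,
\]
with similar formulas at order three and four. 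From these one reads off the pointwise bound $|\pt_z^\alpha \pt_{\ol z}^\beta F(z)| \lesssim_p |z|^{p+1-(\alpha+\beta)}$, valid for $\alpha+\beta\leq 3$ when $p\geq 2$ (so that the third derivatives are continuous and the $|z|^{p-2}$ factor makes sense at $z=0$) and for $\alpha+\beta\leq 4$ when $p\geq 3$.

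Next I would apply the chain rule in Wirtinger form. For any $C^2$ function $H:\bb{C}\to\bb{C}$ and smooth $u:\bb{T}^d\to\bb{C}$,
\[
    \pt_j H(u) = H_z(u)\pt_j u + H_{\ol z}(u)\pt_j \ol u,
\]
so that $|\nabla H(u)| \lesssim (|H_z(u)| + |H_{\ol z}(u)|)|\nabla u|$. Differentiating once more and summing in $j$,
\[
    \Delta H(u) = H_z(u)\Delta u + H_{\ol z}(u)\Delta \ol u + \sum_{j}\Bigl[H_{zz}(u)(\pt_j u)^2 + 2H_{z\ol z}(u)|\pt_j u|^2 + H_{\ol z\ol z}(u)(\pt_j \ol u)^2\Bigr],
\]
whence
\[
    |\Delta H(u)| \lesssim \bigl(|H_z(u)| + |H_{\ol z}(u)|\bigr)|\Delta u| + \bigl(|H_{zz}(u)| + |H_{z\ol z}(u)| + |H_{\ol z\ol z}(u)|\bigr)|\nabla u|^2.
\]

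Finally I would substitute. For $G \in \{\pt_z F, \pt_{\ol z}F\}$, the quantities $G_z, G_{\ol z}$ are second derivatives of $F$ and $G_{zz}, G_{z\ol z}, G_{\ol z\ol z}$ are third derivatives, bounded by $|u|^{p-1}$ and $|u|^{p-2}$ respectively; this gives the first two estimates under the hypothesis $p\geq 2$. For $G \in \{\pt_z^2 F, \pt_z\pt_{\ol z}F, \pt_{\ol z}^2 F\}$, $G_z$ and $G_{\ol z}$ are third derivatives of $F$ and $G_{zz}, G_{z\ol z}, G_{\ol z\ol z}$ are fourth derivatives, bounded by $|u|^{p-2}$ and $|u|^{p-3}$; this gives the last two estimates under $p\geq 3$.

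There is no real obstacle here. The only subtlety is the regularity threshold on $p$: one must check that the pointwise derivative formulas for $F$ extend continuously through $z=0$, which is exactly why the first pair of estimates requires $p\geq 2$ (for the Laplacian, one needs $F\in C^3$) and the second pair requires $p\geq 3$ (needing $F\in C^4$).
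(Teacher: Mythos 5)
Your proof is correct and complete. The paper itself omits the argument, stating only that it is ``a straightforward calculus exercise,'' and the Wirtinger chain-rule computation combined with the homogeneity bound $|\pt_z^\alpha\pt_{\ol z}^\beta F(z)|\lesssim_p|z|^{p+1-\alpha-\beta}$ is exactly the intended reasoning, including the correct identification of $p\geq 2$ (resp.\ $p\geq 3$) as the threshold for $F\in C^3$ (resp.\ $C^4$) needed for the Laplacian estimates.
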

The proof of Lemma \ref{lem:derivative_estimates} is a straightforward calculus exercise that we omit.
\begin{proof}[Proof of Proposition \ref{prop:incomparable_frequencies_reduced}]
    Our proof splits along the cases $2 < p < 4$ and $p\geq 4$.\\
    \tbf{Case 1.1:}  ($2 < p < 4$, $N_0 \gg N_1 \geq 1$).
    For fixed $N_0$, we may write
    \begin{align*}
        \bigg|\int_0^T\int_{\bb{T}^3} v_{N_0}g_{N_1}D_{N_1}F(h_{\leq N_1})~dxdt\bigg|
            = \bigg|\int_0^T\int_{\bb{T}^3} v_{N_0}g_{N_1}P_{\sim N_0}(D_{N_1}F(h_{\leq N_1}))~dxdt\bigg|.   
    \end{align*}
    If $g = w$, then $D_{N_1}F = G\in\{\pt_zF,\pt_{\ol{z}}F\}$. Therefore by Bernstein, Lemma \ref{lem:derivative_estimates}, H\"older, Lemma \ref{lem:critical_Strichartz}, and Lemma \ref{lem:Strichartz_derivatives}, we have
    \begin{align*}
        \|P_{\sim N_0}(D_{N_1}F(h_{\leq N_1}))\|_{L_{t,x}^2}
            &\lesssim N_0^{-1}\|\nabla G(h_{\leq N_1})\|_{L_{t,x}^2}\\
            &\lesssim N_0^{-1}\||h_{\leq N_1}|^{p-1}\nabla h_{\leq N_1}\|_{L_{t,x}^2}\\
            &\leq N_0^{-1}\|h_{\leq N_1}\|_{L_{t,x}^{5p/2}}^{p-1}\|\nabla h_{\leq N_1}\|_{L_{t,x}^{\frac{10p}{p+4}}}\\
            &\lesssim N_0^{-1}N_1^{\frac{1}{2}}\|h_{\leq N_1}\|_{Y^{s_c}}^p.
    \end{align*}
    If $g = u$, then $D_{N_1}F = w_{\leq N_1}G$ where $G\in\{\pt_z^2F,\pt_z\pt_{\ol{z}}F,\pt_{\ol{z}}^2F\}$.
    In this case $P_{\sim N_0}(D_{N_1}F(h_{\leq N_1})) = w_{\leq N_1}P_{\sim N_0}(G(h_{\leq N_1}))$, and similarly to above we have
    \begin{align*}
        \|P_{\sim N_0}(D_{N_1}F(h_{\leq N_1}))\|_{L_{t,x}^2}
            &= \|w_{\leq N_1}P_{\sim N_0}(G(h_{\leq N_1}))\|_{L_{t,x}^2}\\
            &\leq \|w_{\leq N_1}\|_{L_{t,x}^{5p/2}}\|P_{\sim N_0}(G(h_{\leq N_1}))\|_{L_{t,x}^{\frac{10p}{5p-4}}}\\
            &\lesssim N_0^{-1}\|w_{\leq N_1}\|_{L_{t,x}^{5p/2}}\|\nabla(G(h_{\leq N_1}))\|_{L_{t,x}^{\frac{10p}{5p-4}}}\\
            &\lesssim N_0^{-1}\|w_{\leq N_1}\|_{L_{t,x}^{5p/2}}\||h_{\leq N_1}|^{p-2}\nabla h_{\leq N_1}\|_{L_{t,x}^{\frac{10p}{5p-4}}}\\
            &\lesssim N_0^{-1}\|w_{\leq N_1}\|_{L_{t,x}^{5p/2}}\||h_{\leq N_1}\|_{L_{t,x}^{5p/2}}^{p-2}\|\nabla h_{\leq N_1}\|_{L_{t,x}^{\frac{10p}{p+4}}}\\
            &\lesssim N_0^{-1}N_1^{\frac{1}{2}}\|w_{\leq N_1}\|_{Y^{s_c}}\|h_{\leq N_1}\|_{Y^{s_c}}^{p-1}.
    \end{align*}
    We now estimate using H\"older, the bilinear Strichartz estimate (Lemma \ref{lem:bilinear_Strichartz}), and the above estimates.
    When $g = w$ we obtain
    \begin{align*}
        &\bigg|\int_0^T\int_{\bb{T}^3} v_{N_0}g_{N_1}P_{\sim N_0}(D_{N_1}F(h_{\leq N_1}))~dxdt\bigg|\\
            &\lesssim \|v_{N_0}g_{N_1}\|_{L_{t,x}^2}\|P_{\sim N_0}(D_{N_1}F(h_{\leq N_1}))\|_{L_{t,x}^2}\\
            &\lesssim \frac{N_1}{N_0}\|v_{N_0}\|_{Y^0}\|w_{N_1}\|_{Y^0}\|h_{\leq N_1}\|_{Y^{s_c}}^p\\
            &\lesssim \left(\frac{N_1}{N_0}\right)^{1-s_c}\|v_{N_0}\|_{Y^{-s_c}}\|w_{N_1}\|_{Y^{s_c}}\|h_{\leq N_1}\|_{Y^{s_c}}^p,
    \end{align*}
    while when $g = u$ we similarly obtain
    \begin{align*}
        &\bigg|\int_0^T\int_{\bb{T}^3} v_{N_0}g_{N_1}P_{\sim N_0}(D_{N_1}F(h_{\leq N_1}))~dxdt\bigg|\\
            &\lesssim \left(\frac{N_1}{N_0}\right)^{1-s_c}\|v_{N_0}\|_{Y^{-s_c}}\|u_{N_1}\|_{Y^{s_c}}\|w_{\leq N_1}\|_{Y^{s_c}}\|h_{\leq N_1}\|_{Y^{s_c}}^{p-1}.
    \end{align*}
    Since $2<p<4$, we have $1-s_c>0$. Therefore the above estimate is summable over $N_0 \gg N_1$ using Cauchy-Schwarz similarly to the proof of \eqref{eqn:reduced_main_estimates} for $p=2$ from Section 3.1.
    Performing the sum, this part of Proposition \ref{prop:incomparable_frequencies_reduced} follows.
    \\
    \tbf{Case 1.2:} ( $2 < p < 4$, $1 \leq N_0 \ll N_1$).
    This case is similar to the previous case.
    Since the highest frequency is $N_1$, we now have
    \begin{align*}
        \bigg|\int_0^T\int_{\bb{T}^3} v_{N_0}g_{N_1}D_{N_1}F(h_{\leq N_1})~dxdt\bigg|
            = \bigg|\int_0^T\int_{\bb{T}^3} v_{N_0}g_{N_1}P_{\sim N_1}(D_{N_1}F(h_{\leq N_1}))~dxdt\bigg|.   
    \end{align*}
    Estimating as before, when $g = w$ we have
    \begin{align*}
        \|P_{\sim N_1}(D_{N_1}F(h_{\leq N_1}))\|_{L_{t,x}^2}
            &\lesssim N_1^{-\frac{1}{2}}\|h_{\leq N_1}\|_{Y^{s_c}}^p
    \end{align*}
    while when $g= u$ we have
    \begin{align*}
        \|P_{\sim N_1}(D_{N_1}F(h_{\leq N_1}))\|_{L_{t,x}^2}
            &\lesssim N_1^{-\frac{1}{2}}\|w_{\leq N_1}\|_{Y^{s_c}}\|h_{\leq N_1}\|_{Y^{s_c}}^{p-1}.
    \end{align*}
    Applying H\"older, bilinear Strichartz, and the above estimates, when $g = w$ we obtain
    \begin{align*}
        &\bigg|\int_0^T\int_{\bb{T}^3} v_{N_0}g_{N_1}P_{\sim N_0}(D_{N_1}F(h_{\leq N_1}))~dxdt\bigg|\\
            &\lesssim \|v_{N_0}g_{N_1}\|_{L_{t,x}^2}\|P_{\sim N_0}(D_{N_1}F(h_{\leq N_1}))\|_{L_{t,x}^2}\\
            &\lesssim \left(\frac{N_0}{N_1}\right)^\frac{1}{2}\|v_{N_0}\|_{Y^0}\|w_{N_1}\|_{Y^0}\|h_{\leq N_1}\|_{Y^{s_c}}^p\\
            &\lesssim \left(\frac{N_0}{N_1}\right)^{\frac{1}{2}+s_c}\|v_{N_0}\|_{Y^{-s_c}}\|w_{N_1}\|_{Y^{s_c}}\|h_{\leq N_1}\|_{Y^{s_c}}^p,
    \end{align*}
    while when $g = u$ we obtain
    \begin{align*}
        &\bigg|\int_0^T\int_{\bb{T}^3} v_{N_0}g_{N_1}P_{\sim N_0}(D_{N_1}F(h_{\leq N_1}))~dxdt\bigg|\\
            &\lesssim \left(\frac{N_0}{N_1}\right)^{\frac{1}{2}+s_c}\|v_{N_0}\|_{Y^{-s_c}}\|u_{N_1}\|_{Y^{s_c}}\|w_{\leq N_1}\|_{Y^{s_c}}\|h_{\leq N_1}\|_{Y^{s_c}}^{p-1}.
    \end{align*}
    Again, $\frac{1}{2}+s_c>0$, so this is summable over $N_0 \ll N_1$ and this case is proved.
    \\
    \tbf{Case 2.1:} ($p \geq 4$, $N_0 \gg N_1 \geq 1$).
    This proof proceeds similarly to that of \tbf{Case 1.1}, except that instead of a gradient we take a Laplacian.
    Since the highest frequency is $N_0$, we localize the nonlinear factor to frequencies $\sim N_0$.
    When $g = w$, we have (with $G\in\{\pt_zF,\pt_{\ol{z}}F\}$)
    \begin{align*}
        \|P_{\sim N_0}&(D_{N_1}F(h_{\leq N_1}))\|_{L_{t,x}^2}\\
            &\lesssim N_0^{-2}\|\Delta G(h_{\leq N_1})\|_{L_{t,x}^2}\\
            &\lesssim N_0^{-2}\||h_{\leq N_1}|^{p-1}|\Delta h_{\leq N_1}| + |h_{\leq N_1}|^{p-2}|\nabla h_{\leq N_1}|^2\|_{L_{t,x}^2}\\
            &\leq N_0^{-2}(\|h_{\leq N_1}\|_{L_{t,x}^{5p/2}}^{p-1}\|\Delta h_{\leq N_1}\|_{L_{t,x}^{\frac{10p}{p+4}}} + \|h_{\leq N_1}\|_{L_{t,x}^{5p/2}}^{p-2}\|\nabla h_{\leq N_1}\|_{L_{t,x}^{\frac{20p}{p+8}}}^2)\\
            &\lesssim N_0^{-2}N_1^{\frac{3}{2}}\|h_{\leq N_1}\|_{Y^{s_c}}^p,
    \end{align*}
    while when $g = u$ we have (with $G\in\{\pt_z^2F,\pt_z\pt_{\ol{z}}F,\pt_{\ol{z}}^2F\}$)
    \begin{align*}
        \|P_{\sim N_0}&(D_{N_1}F(h_{\leq N_1}))\|_{L_{t,x}^2}\\
            &= \|w_{\leq N_1}P_{\sim N_0}(G(h_{\leq N_1}))\|_{L_{t,x}^2}\\
            &\lesssim N_0^{-2}\|w_{\leq N_1}\|_{L_{t,x}^{5p/2}}\|\Delta G(h_{\leq N_1})\|_{L_{t,x}^\frac{10p}{5p-4}}\\
            &\lesssim N_0^{-2}\|w_{\leq N_1}\|_{Y^{s_c}}\||h_{\leq N_1}|^{p-2}|\Delta h_{\leq N_1}| + |h_{\leq N_1}|^{p-3}|\nabla h_{\leq N_1}|^2\|_{L_{t,x}^2}\\
            &\leq N_0^{-2}\|w_{\leq N_1}\|_{Y^{s_c}}(\|h_{\leq N_1}\|_{L_{t,x}^{5p/2}}^{p-2}\|\Delta h_{\leq N_1}\|_{L_{t,x}^{\frac{10p}{p+4}}} + \|h_{\leq N_1}\|_{L_{t,x}^{5p/2}}^{p-3}\|\nabla h_{\leq N_1}\|_{L_{t,x}^{\frac{20p}{p+8}}}^2)\\
            &\lesssim N_0^{-2}N_1^{\frac{3}{2}}\|w_{\leq N_1}\|_{Y^{s_c}}\|h_{\leq N_1}\|_{Y^{s_c}}^{p-1}.
    \end{align*}
    By H\"older, bilinear Strichartz, and the above estimate, when $g=w$ we obtain
    \begin{align*}
        &\bigg| \int_0^T \int_{\bb{T}^3}  v_{N_0} g_{N_1}P_{\sim N_0}(D_{N_1}F(h_{\leq N_1})) ~dxdt\bigg|\\
            &\lesssim \|v_{N_0}g_{N_1}\|_{L_{t,x}^2}\|\|P_{\sim N_0}(D_{N_1}F(h_{\leq N_1}))\|_{L_{t,x}^2}\\
            &\lesssim \left(\frac{N_1}{N_0}\right)^2\|v_{N_0}\|_{Y^0}\|w_{N_1}\|_{Y^0}\|h_{\leq N_1}\|_{Y^{s_c}}^p\\
            &\lesssim \left(\frac{N_1}{N_0}\right)^{2-s_c}\|v_{N_0}\|_{Y^{-s_c}}\|w_{N_1}\|_{Y^{s_c}}\|h_{\leq N_1}\|_{Y^{s_c}}^p
    \end{align*}
    while when $g = u$ we obtain
    \begin{align*}
        &\bigg| \int_0^T \int_{\bb{T}^3}  v_{N_0} g_{N_1}P_{\sim N_0}(D_{N_1}F(h_{\leq N_1})) ~dxdt\bigg|\\
            &\lesssim \|v_{N_0}g_{N_1}\|_{L_{t,x}^2}\|\|P_{\sim N_0}(D_{N_1}F(h_{\leq N_1}))\|_{L_{t,x}^2}\\
            &\lesssim \left(\frac{N_1}{N_0}\right)^2\|v_{N_0}\|_{Y^0}\|u_{N_1}\|_{Y^0}\|w_{\leq N_1}\|_{Y^{s_c}}\|h_{\leq N_1}\|_{Y^{s_c}}^{p-1}\\
            &\lesssim \left(\frac{N_1}{N_0}\right)^{2-s_c}\|v_{N_0}\|_{Y^{-s_c}}\|u_{N_1}\|_{Y^{s_c}}\|w_{\leq N_1}\|_{Y^{s_c}}\|h_{\leq N_1}\|_{Y^{s_c}}^{p-1}.
    \end{align*}
    Since $p\geq 4$, $2-s_c > 0$ and thus this is summable over $N_0 \gg N_1$.
    \\
    \tbf{Case 2.2:} ($p \geq 4$, $1 \leq N_0 \ll N_1$).
    This is covered by the proof of \tbf{Case 1.2}, since $\frac{1}{2}+s_c>0$ for all $p>2$.
\end{proof}
\begin{remark}\label{rem:replacing_integrands}
    As we have alluded to earlier, these estimates are rather lenient with respect to the precise functions inside the integrals.
    For instance, if in \tbf{Case 1.1} we take $g=w$ and estimate $\|P_{\sim N_0}(D_{N_1}F(h_{\leq \frac{N_1}{2}} + \theta h_{N_1}))\|_{L_{t,x}^2}$ instead of $\|P_{\sim N_0} (D_{N_1}F(h_{\leq N_1})\|_{L_{t,x}^2}$, then since $h_{\leq \frac{N_1}{2}} + \theta h_{N_1} = P_{\leq 2N_1}(h_{\leq \frac{N_1}{2}} + \theta h_{N_1})$ we would find via the same proof that
    \begin{align*}
        \|P_{\sim N_0} (D_{N_1}F(h_{\leq \frac{N_1}{2}} + \theta h_{N_1}))\|_{L_t,x}^2
            &\lesssim N_0^{-1}N_1^{\frac{1}{2}}\|h_{\leq\frac{N_1}{2}} + \theta h_{N_1}\|_{Y^{s_c}}^p\\
            &\lesssim N_0^{-1}N_1^{\frac{1}{2}}(\|h_{\leq\frac{N_1}{2}}\|_{Y^{s_c}} + \|h_{\leq N_1}\|_{Y^{s_c}})^p\\
            &\lesssim N_0^{-1}N_1^{\frac{1}{2}}\|h_{\leq N_1}\|_{Y^{s_c}}^p.
    \end{align*}
    This sort of argument can be used to fill in the remaining gaps in our sketch of the proof of Proposition \ref{prop:incomparable_frequencies} from Proposition \ref{prop:incomparable_frequencies_reduced}.
\end{remark}

\subsection{Controlling sums over comparable frequencies}
In this section we prove Proposition \ref{prop:comparable_frequencies}.
In the regime of comparable frequencies $N_0\sim N_1$, the methods in the previous section cannot be used because there is no way to restrict the nonlinear factor to a specific frequency.
Instead, we aim to recreate the case of $p=2$ as best as possible by iterating the paradifferential linearization technique used earlier.
The precise details of how this is done differ between the cases $p\geq 3$ and $2<p<3$, and we treat them separately.
The difference arises from the regularity of $F(z) = |z|^pz$, which determines how many times we may iterate the linearization.

\subsubsection{The case $p \geq 3$}
Let $p\geq 3$. In this case, $F(z) = |z|^pz$ admits four derivatives, and hence we may iterate our paradifferential linearization process four times.
We begin with the formal expression arising from Proposition \ref{prop:Bony_linearization}:
\begin{align*}
    F(u+w) - F(u)
        &= \sum_{N\geq 1} [F(u_{\leq N} + w_{\leq N}) - F(u_{\leq \frac{N}{2}} + w_{\leq \frac{N}{2}})]\\
        &- \sum_{N\geq 1} [F(u_{\leq N}) - F(u_{\leq \frac{N}{2}})]
\end{align*}
where we interpret this equality in terms of convergence in $L^q$, $1\leq q < \frac{d}{2}$, and in particular weak convergence against continuous functions.
Fixing $N_0$, we throw away the summands with $N \gg N_0$ and $N \ll N_0$, and apply \eqref{eqn:FTC_LP} to obtain
\begin{align*}
    \sum_{N_0\sim N_1\geq 1} & (u_{N_1}+w_{N_1})\int_0^1 \pt_zF((P_{\leq\frac{N_1}{2}} + \theta P_{N_1})(u+w))~d\theta + \tnm{similar terms}\\
    &- \sum_{N_0\sim N_1\geq 1} u_{N_1}\int_0^1 \pt_zF((P_{\leq\frac{N_1}{2}} + \theta P_{N_1})u)~d\theta + \tnm{similar terms}.
\end{align*}
We now apply Proposition \ref{prop:Bony_linearization} and \eqref{eqn:FTC} again to the terms inside the integral: that is, we write
\begin{align*}
    \pt_zF&((P_{\leq \frac{N_1}{2}} + \theta P_N)u)\\
        &= \sum_{N \geq 1} [\pt_zF(P_{\leq N}(P_{\leq \frac{N_1}{2}} + \theta P_N)u) - \pt_zF(P_{\leq\frac{N}{2}}(P_{\leq \frac{N_1}{2}} + \theta P_N)u]\\
        &= \sum_{N\geq 1} P_N(P_{\leq \frac{N_1}{2}} + \theta P_N)u\int_0^1 \pt_z^2F((P_{\leq \frac{N}{2}} + \eta P_N)(P_{\leq \frac{N_1}{2}} + \theta P_N)u)~d\eta\\
        &+ \tnm{simliar terms},
\end{align*}
and substitute these inside the integrals.
Note that $P_N(P_{\leq \frac{N_1}{2}} + \theta P_{N_1}) = 0$ for $N > 2N_1$, and $P_N(P_{\leq \frac{N_1}{2}} + \theta P_{N_1})$ is equivalent to $P_N$ for $N \leq 2N_1$ for the purpose of estimates in the manner that we have explained in our sketch of Proposition \ref{prop:incomparable_frequencies} and Remark \ref{rem:replacing_integrands}.
Similarly, we may treat $(P_{\leq \frac{N}{2}} + \eta P_N)(P_{\leq \frac{N_1}{2}} + \theta P_{N_1})$ as $P_{\leq N}$ for $N \leq 2N_1$ for the purpose of proving Proposition \ref{prop:comparable_frequencies}.

We summarize these ideas and calculations in the notation $\sim$.
For two expressions $A$ and $B$, we say $A \sim B$ if $A$ and $B$ are related by collapsing Littlewood-Paley projections (e.g. $P_N(P_{\leq \frac{N_1}{2}} + \theta P_{N_1}) \sim P_N$ for $N\leq 2N_1$), removing integrals over $[0,1]$, and conjugating factors or derivatives (as is encapsulated in the phrase ``similar terms'' as we have used up to this point).
If $A\sim B$, then $A$ and $B$ should admit the same type of estimates in the manner we have described in the sketch of Proposition \ref{prop:incomparable_frequencies} and Remark \ref{rem:replacing_integrands}.
Thus we may succinctly express our first two iterations of the linearization in the following way:
\begin{align*}
    \sum_{N_0\sim N_1\geq 1} &[F(u_{\leq N_1} + w_{\leq N_1}) - F(u_{\leq \frac{N_1}{2}} + w_{\leq \frac{N_1}{2}})]
    - \sum_{N_0\sim N_1\geq 1} [F(u_{\leq N_1}) - F(u_{\leq \frac{N_1}{2}})]\\
    &\sim \sum_{N_0\sim N_1\geq 1} (u_{N_1} + w_{N_1})\pt_zF(u_{\leq N_1} + w_{\leq N_1})
        - \sum_{N_0\sim N_1\geq 1} u_{N_1}\pt_zF(u_{\leq N_1})\\
    &\sim \sum_{N_0\sim N_1 \gtrsim N_2 \geq 1} (u_{N_1}+w_{N_1})(u_{N_2}+w_{N_2})\pt_z^2F(u_{\leq N_2} + w_{\leq N_2})\\
    &\hspace{8em}- \sum_{N_0\sim N_1 \gtrsim N_2 \geq 1} u_{N_1}u_{N_2}\pt_z^2F(u_{\leq N_2}).
\end{align*}
From here on we will use the symbol $\sim$ to summarize all calculations involving Proposition \ref{prop:Bony_linearization}, \eqref{eqn:FTC}, and \eqref{eqn:FTC_LP}.
We now linearize with Proposition \ref{prop:Bony_linearization} and \eqref{eqn:FTC_LP} once more, then shift terms and linearize with \eqref{eqn:FTC} one last time to obtain:
\begin{align*}
    &\sum_{N_0\sim N_1 \gtrsim N_2 \geq 1} (u_{N_1}+w_{N_1})(u_{N_2}+w_{N_2})\pt_z^2F(u_{\leq N_2} + w_{\leq N_2})\\
    &\hspace{7em}- \sum_{N_0\sim N_1 \gtrsim N_2 \geq 1} u_{N_1}u_{N_2}\pt_z^2F(u_{\leq N_2})\\
    &\sim \sum_{N_0\sim N_1 \gtrsim N_2 \gtrsim N_3 \geq 1} (u_{N_1}+w_{N_1})(u_{N_2}
    + w_{N_2})(u_{N_3} + w_{N_3})\pt_z^3F(u_{\leq N_3} + w_{\leq N_3})\\
    &\hspace{7em}- \sum_{N_0\sim N_1 \gtrsim N_2 \gtrsim N_3 \geq 1} u_{N_1}u_{N_2}u_{N_3}\pt_z^3F(u_{\leq N_3})\\
    &\sim \sum_{N_0\sim N_1 \gtrsim N_2 \gtrsim N_3 \geq 1} u_{N_1}^{(1)}u_{N_2}^{(2)}u_{N_3}^{(3)}\pt_z^3 F(u_{\leq N_3} + w_{\leq N_3}) \\
    &\hspace{7em}- \sum_{N_0\sim N_1 \gtrsim N_2 \gtrsim N_3 \geq 1} u_{N_1}u_{N_2}u_{N_3}[\pt_z^3F(u_{\leq N_3} + w_{\leq N_3}) - \pt_z^3 F(u_{\leq N_3})]\\
    &\sim \sum_{N_0\sim N_1 \gtrsim N_2 \gtrsim N_3 \geq 1} u_{N_1}^{(1)}u_{N_2}^{(2)}u_{N_3}^{(3)}\pt_z^3 F(u_{\leq N_3} + w_{\leq N_3}) \\
    &\hspace{8em}- \sum_{N_0\sim N_1 \gtrsim N_2 \gtrsim N_3 \geq 1} u_{N_1}u_{N_2}u_{N_3}w_{\leq N_3}\pt_z^4F(u_{\leq N_3} + w_{\leq N_3}).
\end{align*}
Here $u^{(j)}\in\{u,w\}$ with at least one $u^{(j)} = w$.
Note that in the last step of this linearization, we require $p\geq 3$ so that the fourth-order derivatives of $F$ in $z$ and $\ol{z}$ are well-defined.
Therefore to establish Proposition \ref{prop:comparable_frequencies} for $p\geq 3$, it suffices to prove:
\begin{proposition}\label{prop:comparable_frequencies_reduced_p_geq_3}
    Fix $p \geq 3$.
    Let $0 < T \leq 1$.
    Then
    \begin{align*}
        &\sum_{N_0\sim N_1\gtrsim N_2\gtrsim N_3\geq 1} \bigg|\int_0^T\int_{\bb{T}^3} v_{N_0}u_{N_1}^{(1)}u_{N_2}^{(2)}u_{N_3}^{(3)}D_{N_3}F(h_{\leq N_3})~dxdt\bigg|\\
        &\hspace{5em}\lesssim \|v\|_{Y^{-s_c}}\|u^{(1)}\|_{Y^{s_c}}\|u^{(2)}\|_{Y^{s_c}}\|u^{(3)}\|_{Y^{s_c}}\max\{\|w\|_{Y^{s_c}},\|h\|_{Y^{s_c}}\}\|h\|_{Y^{s_c}}^{p-3},
    \end{align*}
    where
    \[
        D_{N_3} \in\{\pt_z^3,\pt_z^2\pt_{\ol{z}},\pt_z\pt_{\ol{z}}^2,\pt_{\ol{z}}^3\}
    \]
    if $u^{(j)} = w$ for some $j=1,2,3$, and 
    \[
        D_{N_3} \in \{w_{\leq N_3}\pt_z^4,w_{\leq N_3}\pt_z^3\pt_{\ol{z}},w_{\leq N_3}\pt_z^2\pt_{\ol{z}}^2,,w_{\leq N_3}\pt_z\pt_{\ol{z}}^3,w_{\leq N_3}\pt_{\ol{z}}^4\}
    \]
    if $u^{(j)}\neq w$ for all $j=1,2,3$.
\end{proposition}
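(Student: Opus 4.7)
The plan is to adapt the cube-decomposition strategy used for the cubic NLS in Section \ref{sec:cubic} to the present five-factor setting. For fixed dyadic $N_0 \sim N_1 \gtrsim N_2 \gtrsim N_3$, I would partition $\bb{Z}^3$ into cubes $\{C_j\}$ of side $N_2$ and decompose $v_{N_0} = \sum_j P_{C_j}v_{N_0}$ and $u_{N_1}^{(1)} = \sum_k P_{C_k}u_{N_1}^{(1)}$, retaining only the bounded-overlap pairs $C_j \sim C_k$. I then apply H\"older's inequality to distribute the integrand into $L^r_{t,x}$ for each of $P_{C_j}v_{N_0}, P_{C_k}u_{N_1}^{(1)}, u_{N_2}^{(2)}$, $L^s_{t,x}$ for $u_{N_3}^{(3)}$, and $L^c_{t,x}$ with $c = \frac{5p}{2(p-2)}$ for $D_{N_3}F(h_{\leq N_3})$. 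The choice of $c$ is motivated by the pointwise bounds $|D_{N_3}F(h_{\leq N_3})| \lesssim |h_{\leq N_3}|^{p-2}$ (when some $u^{(j)} = w$) or $|w_{\leq N_3}||h_{\leq N_3}|^{p-3}$ (when all $u^{(j)}=u$): these combine with H\"older in space-time and the critical Strichartz estimate (Lemma \ref{lem:critical_Strichartz}) to give $\|D_{N_3}F(h_{\leq N_3})\|_{L^c_{t,x}} \lesssim \max\{\|w\|_{Y^{s_c}},\|h\|_{Y^{s_c}}\}\|h\|_{Y^{s_c}}^{p-3}$, with no $N_3$-loss. For the four linear factors I invoke Theorem \ref{theorem:scale_invariant_Strichartz}, which requires $r, s > \tfrac{10}{3}$.

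After converting the resulting $Y^0$ norms to $Y^{\pm s_c}$ norms, each summand acquires a weight of the form $N_0^{s_c}N_1^{-s_c}N_2^\alpha N_3^\beta$, with $\alpha = \tfrac{9}{2} - \tfrac{15}{r} - s_c$ and $\beta = \tfrac{3}{2} - \tfrac{5}{s} - s_c$. A short algebraic check shows that the H\"older constraint $\tfrac{3}{r} + \tfrac{1}{s} = \tfrac{3p+4}{5p}$ (equivalent to $\tfrac{3}{r} + \tfrac{1}{s} + \tfrac{1}{c} = 1$) automatically forces $\alpha + \beta = 0$, so after using $N_0 \sim N_1$ the weight simplifies to the Schur-type kernel $(N_3/N_2)^{-\alpha}$. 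The condition $\alpha < 0$ reduces to $r < \tfrac{15p}{3p+2}$, and together with $r, s > \tfrac{10}{3}$ this carves out a nonempty range of admissible $(r, s)$ for every $p \geq 3$ (for example $r=4$, $s=\tfrac{60}{7}$ when $p=3$, and $r=4$, $s=20$ when $p=4$). Fixing such a choice, I would sum over cubes $C_j \sim C_k$ via Cauchy--Schwarz (using bounded overlap and the orthogonality $\sum_{C_j}\|P_{C_j}v_{N_0}\|_{Y^{-s_c}}^2 = \|v_{N_0}\|_{Y^{-s_c}}^2$), sum over $(N_2, N_3)$ by Schur's test, and finally sum over $N_0 \sim N_1$ by Cauchy--Schwarz to obtain the desired bound.

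The main obstacle is the simultaneous balancing of the H\"older exponents against Strichartz admissibility, the H\"older identity forcing $\alpha + \beta = 0$, and the Schur summability condition $\alpha < 0$. The admissible window in $r$ shrinks as $p$ decreases and collapses at $p = \tfrac{4}{3}$, consistent with the conjectural threshold; for $p \geq 3$ the window is nonempty, and the subsequent summation steps are routine extensions of the cubic argument of Section \ref{sec:cubic}.
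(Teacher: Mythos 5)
Your exponent bookkeeping and the choice of H\"older exponents are internally consistent, and you correctly recognize that the naive application of linear Strichartz to all four factors fails to sum over $N_0\sim N_1$ and that some extra gain must be extracted from the $(v_{N_0}, u_{N_1}^{(1)})$ pair. However, the cube decomposition step does not go through as stated, and this is a genuine gap.

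The cube decomposition argument of Section \ref{sec:cubic} relies on the fact that after decomposing $v_{N_0} = \sum_j P_{C_j}v_{N_0}$ and $u_{N_1}^{(1)} = \sum_k P_{C_k}u_{N_1}^{(1)}$, only the bounded-overlap pairs $C_j\sim C_k$ contribute. This orthogonality is what makes the Cauchy--Schwarz summation over cubes possible, and it depends crucially on the \emph{remaining} factors in the integrand having Fourier support in a ball of radius $\lesssim N_2$: only then does the frequency-matching condition $\xi_0 + \xi_1 + (\tnm{rest}) = 0$ force $C_j + C_k$ to meet a ball of radius $O(N_2)$. In your setting the remaining factors are $u_{N_2}^{(2)}u_{N_3}^{(3)}D_{N_3}F(h_{\leq N_3})$, and while the first two are frequency-localized to $\lesssim N_2$, the nonlinear factor $D_{N_3}F(h_{\leq N_3})$ is not: a smooth but nonpolynomial function of a band-limited input has unbounded Fourier support, extending to frequencies $\gg N_3$ (indeed up to $\sim N_0$, since that is all the frequency matching requires). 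Consequently the restriction to $C_j\sim C_k$ is not justified, and without it the sum over cube pairs cannot be controlled. The paper's own proof of Proposition \ref{prop:comparable_frequencies_reduced_p_geq_3} avoids this issue entirely by applying the bilinear Strichartz estimate (Lemma \ref{lem:bilinear_Strichartz}) to the pairs $v_{N_0}u_{N_2}^{(2)}$ and $u_{N_1}^{(1)}u_{N_3}^{(3)}$, each of which involves only frequency-localized functions, and placing $D_{N_3}F(h_{\leq N_3})$ in $L_{t,x}^\infty$ via Lemma \ref{lem:critical_Strichartz}; this produces the weight $(N_0/N_1)^{s_c}(N_3/N_2)^{s_c - 1/2}$, which is summable since $s_c>\tfrac12$ when $p\geq 3$.

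Your argument could in principle be repaired by splitting $D_{N_3}F(h_{\leq N_3}) = P_{\leq N_2}D_{N_3}F(h_{\leq N_3}) + \sum_{M > N_2}P_M D_{N_3}F(h_{\leq N_3})$ and treating the high-frequency tail separately with nonlinear Bernstein (Proposition \ref{prop:nonlinear_Bernstein}) to gain decay in $M$ --- this is precisely what the paper does in the harder case $2 < p < 3$ (Proposition \ref{prop:comparable_frequencies_reduced_p_2_to_3}, estimates \eqref{eqn:low_frequency_estimate} and \eqref{eqn:high_frequency_estimate}). But for $p\geq 3$ the bilinear-Strichartz route is both simpler and sufficient, and the cube decomposition is unnecessary machinery.
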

\begin{remark}\label{rem:fourth_derivatives}
    The condition $p\geq 3$ is only required for the estimates where $u^{(j)}\neq w$ for $j=1,2,3$, since these require the fourth-order derivative of $F$ to be defined.
    For the estimates where some $u^{(j)} = w$, we need only $p\geq 2$.
    This will be useful in the next section.
\end{remark}
\begin{proof}
    For any integer $0\leq k < p+1$, and any $k$-th order derivative $G$ of $F(z) = |z|^pz$, we have $|G(z)| \lesssim |z|^{p-k}$.
    Therefore
    \[
        |D_{N_3}F(h_{\leq N_3})| \lesssim \max\{|w_{\leq N_3}|,|h_{\leq N_3}|\}|h_{\leq N_3}|^{p-3}.    
    \]
    Therefore by Lemma \ref{lem:critical_Strichartz} we have
    \begin{align*}
        \|D_{N_3}F(h_{\leq N_3})\|_{L_{t,x}^\infty}
            &\lesssim N^{\frac{2(p-2)}{p}}\max\{\|w_{\leq N_3}\|_{Y^{s_c}},\|h_{\leq N_3}\|_{Y^{s_c}}\}\|h_{\leq N_3}\|_{Y^{s_c}}^{p-3}.  
    \end{align*}
    Estimating the integral by H\"older, bilinear Strichartz, and Strichartz, we have
    \begin{align*}
        &\bigg| \int_0^T \int_{\bb{T}^3} v_{N_0}u_{N_1}^{(1)}u_{N_2}^{(2)}u_{N_3}^{(3)}D_{N_3}F(h_{\leq N_3}) ~dxdt \bigg|\\
            &\lesssim \|v_{N_0}u_{N_2}^{(2)}\|_{L_{t,x}^2}\|u_{N_1}^{(1)}u_{N_3}^{(1)}\|_{L_{t,x}^2}\|D_{N_3}F(h_{\leq N_3})\|_{L_{t,x}^\infty}\\
            &\lesssim \left(\frac{N_0}{N_1}\right)^{s_c}\left(\frac{N_3}{N_2}\right)^{s_c-\frac{1}{2}}\|v_{N_0}\|_{Y^{-s_c}}\|u_{N_1}^{(1)}\|_{Y^{s_c}}\|u_{N_2}^{(2)}\|_{Y^{s_c}}\|u_{N_3}^{(3)}\|_{Y^{s_c}}\\
            &\hspace{10em}\cdot\max\{\|w_{\leq N_3}\|_{Y^{s_c}},\|h_{\leq N_3}\|_{Y^{s_c}}\}\|h_{\leq N_3}\|_{Y^{s_c}}^{p-3}.
    \end{align*}
    Summing with Cauchy-Schwarz first over $N_2 \gtrsim N_3$, then over $N_0\sim N_1$ establishes the desired estimate.
\end{proof}

\subsubsection{The case $2<p<3$}
The last case to consider is $2<p<3$.
In this case, $F$ does not admit four derivatives, so we cannot obtain the linearized expression we used for the case $p\geq 3$.
However, $F$ admits three derivatives and the third derivatives of $F$ are H\"older continuous, which we can ``differentiate'' to obtain sufficient decay to sum.

First we obtain the linearization of the nonlinearity.
Employing the notation $\sim$ as in the previous section, we obtain:
\begin{align*}
    &\sum_{N_0\sim N_1\geq 1} [F(u_{\leq N_1} + w_{\leq N_1}) - F(u_{\leq\frac{N_1}{2}} + w_{\leq\frac{N_1}{2}})] - \sum_{N_0\sim N_1\geq 1} [F(u_{\leq N_1}) - F(u_{\leq\frac{N_1}{2}})]\\
        &\sim \sum_{N_0\sim N_1\geq 1} (u_{N_1} +w_{N_1})\pt_zF(u_{\leq N_1} + w_{\leq N_1}) - \sum_{N_0\sim N_1\geq 1} u_{N_1}\pt_zF(u_{\leq N_1}) \\
        &\sim \sum_{N_0\sim N_1\gtrsim N_2\geq 1} (u_{N_1} +w_{N_1})(u_{N_2} + w_{N_2})\pt_z^2F(u_{\leq N_2} + w_{\leq N_2}) \\
        &\hspace{6em}- \sum_{N_0\sim N_1 \gtrsim N_2\geq 1} u_{N_1}u_{N_2}\pt_z^2F(u_{\leq N_2})\\
        &\sim \sum_{N_0\sim N_1\gtrsim N_2\geq 1} u_{N_1}^{(1)}u_{N_2}^{(2)}\pt_z^2F(u_{\leq N_2} + w_{\leq N_2}) \\
        &\hspace{6em}+ \sum_{N_0\sim N_1 \gtrsim N_2\geq 1} u_{N_1}u_{N_2}[\pt_z^2 F(u_{\leq N_2} + w_{\leq N_2}) - \pt_z^2F(u_{\leq N_2})]\\
        &\sim \sum_{N_0\sim N_1\gtrsim N_2\gtrsim N_3\geq 1} u_{N_1}^{(1)}u_{N_2}^{(2)}(u_{N_3} + w_{N_3})\pt_z^3F(u_{\leq N_3} + w_{\leq N_3}) \\
        &\hspace{6em}+ \sum_{N_0\sim N_1 \gtrsim N_2\geq 1} u_{N_1}u_{N_2}w_{\leq N_2}\pt_z^3 F(u_{\leq N_2} + w_{\leq N_2}).
\end{align*}
The first summation in the last line is precisely of a form that is controlled by Proposition \ref{prop:comparable_frequencies_reduced_p_geq_3}; see Remark \ref{rem:fourth_derivatives}.
Writing $w_{\leq N_2} = \sum_{N_2\geq N_3} w_{N_3}$, to establish Proposition \ref{prop:comparable_frequencies} for $2<p<3$, it suffices to prove the following estimate:
\begin{proposition}\label{prop:comparable_frequencies_reduced_p_2_to_3}
    Fix $2<p<3$.
    Let $0 < T \leq 1$.
    Then
    \begin{align*}
        \sum_{N_0\sim N_1\gtrsim N_2\geq N_3 \geq 1} &\bigg|\int_0^T \int_{\bb{T}^3} v_{N_0}u_{N_1}u_{N_2}w_{N_3}G(h_{\leq N_2})~dxdt\bigg|\\
            &\lesssim \|v\|_{Y^{-s_c}}\|u\|_{Y^{s_c}}^2\|w\|_{Y^{s_c}}\|h\|_{Y^{s_c}}^{p-2}
    \end{align*}
    where $G\in\{\pt_z^3F,\pt_z^2\pt_{\ol{z}}F,\pt_z\pt_{\ol{z}}^2F,\pt_{\ol{z}}^3F\}$.
\end{proposition}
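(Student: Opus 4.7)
\emph{Strategy.} The plan is to imitate the template of Proposition \ref{prop:comparable_frequencies_reduced_p_geq_3} --- H\"older with bilinear Strichartz on two pairs of linear factors and an $L^\infty$ bound on the nonlinear factor --- but to upgrade the $L^\infty$ estimate on $G(h_{\leq N_2})$ using the H\"older regularity of $G$ via Proposition \ref{prop:nonlinear_Bernstein}. Since $G$ is a third derivative of $F(z) = |z|^p z$ for $2 < p < 3$, a direct computation shows $|G(z)| \lesssim |z|^{p-2}$ and $G$ is globally H\"older continuous of order $p-2$ on $\bb C$, so nonlinear Bernstein applies with $\alpha = p-2$.

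\emph{Main steps.} First, apply H\"older with the pairing $\|v_{N_0} u_{N_2}\|_{L^2_{t,x}} \cdot \|u_{N_1} w_{N_3}\|_{L^2_{t,x}} \cdot \|G(h_{\leq N_2})\|_{L^\infty_{t,x}}$. By Lemma \ref{lem:bilinear_Strichartz}, the first two factors give $N_2^{1/2} \|v_{N_0}\|_{Y^0} \|u_{N_2}\|_{Y^0}$ and $N_3^{1/2} \|u_{N_1}\|_{Y^0} \|w_{N_3}\|_{Y^0}$. Second, the key bound: since $G$ is $(p-2)$-H\"older, combining, for each dyadic scale $M$, spatial Bernstein to pass from $L^{10p/((p+4)(p-2))}$ to $L^\infty$, nonlinear Bernstein (Proposition \ref{prop:nonlinear_Bernstein}), and the gradient Strichartz estimate \eqref{eqn:Strichartz_derivatives_1} of Lemma \ref{lem:Strichartz_derivatives} (giving $\|\nabla h_{\leq N_2}\|_{L^{10p/(p+4)}} \lesssim N_2^{1/2} \|h\|_{Y^{s_c}}$), yields
\[
    \|P_M G(h_{\leq N_2})\|_{L^\infty_{t,x}} \lesssim M^{(p-2)(12-7p)/(10p)} \, N_2^{(p-2)/2} \, \|h\|_{Y^{s_c}}^{p-2}.
\]
Fourier balance in the integral confines the relevant Littlewood-Paley pieces of $G$ to $M \lesssim N_0$, and since the $M$-exponent is negative for $p > 12/7$, the dominant contribution is the scale $M \sim N_0$, producing a prefactor $N_0^{(p-2)(12-7p)/(10p)}$. (The resonant tail $M \ll N_2$, supported where $\xi_{v} + \xi_{u_1}$ cancels at scale $\lesssim N_2$, is handled by a cube decomposition of $v_{N_0}, u_{N_1}$ at side $N_2$, analogous to the Herr-Tataru-Tzvetkov trick recalled in Section 3.1.)

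\emph{Summation.} After converting each $Y^0$ norm to $Y^{\pm s_c}$ using $\|\cdot\|_{Y^0} = N^{\pm s_c}\|\cdot\|_{Y^{\mp s_c}}$ on the appropriate dyadic shell and exploiting $N_0 \sim N_1$, the integrand is controlled by $N_0^{\alpha_0}\, N_2^{\alpha_2}\, N_3^{\alpha_3}$ times $\|v\|_{Y^{-s_c}}\|u_{N_1}\|_{Y^{s_c}}\|u_{N_2}\|_{Y^{s_c}}\|w_{N_3}\|_{Y^{s_c}}\|h\|_{Y^{s_c}}^{p-2}$ with
\[
    \alpha_0 = \tfrac{(p-2)(12-7p)}{10p}, \quad \alpha_2 = \tfrac{(p-2)^2}{2p}, \quad \alpha_3 = \tfrac{2-p}{p}.
\]
The sum over $N_3 \leq N_2$ converges by $\alpha_3 < 0$, absorbing into $\|w\|_{Y^{s_c}}$; the sum over $N_2 \leq N_1$ (with $\alpha_2 > 0$) contributes $N_1^{\alpha_2}\|u\|_{Y^{s_c}}$ via Cauchy-Schwarz; and the crucial algebraic identity
\[
    \alpha_0 + \alpha_2 = -\frac{(p-1)(p-2)}{5p} < 0 \quad (2 < p < 3)
\]
supplies the $N_0$-decay required for a final Cauchy-Schwarz over $N_0 \sim N_1$ to close the estimate to $\|v\|_{Y^{-s_c}}\|u\|_{Y^{s_c}}^2\|w\|_{Y^{s_c}}\|h\|_{Y^{s_c}}^{p-2}$.

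\emph{Main obstacle.} The entire argument hinges on the cancellation $\alpha_0 + \alpha_2 = -(p-1)(p-2)/(5p)$ being strictly negative; since this quantity tends to $0$ as $p \to 2^+$, the estimate degenerates at the cubic endpoint, which is precisely why the case $p = 2$ demands the separate treatment of Section \ref{sec:cubic}. A subsidiary technical difficulty is that the favorable nonlinear Bernstein bound on $\|P_M G\|_{L^\infty}$ only harnesses $N_0$-decay at the scale $M \sim N_0$, so the resonant low-frequency contributions of $G$ must be controlled by the cube decomposition mentioned above.
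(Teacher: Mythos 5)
There are two genuine gaps in your argument, and together they undermine the mechanism by which you claim to close the $N_0$-sum.

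\textbf{The $L^\infty_{t,x}$ bound on $P_MG$ is not justified.} You propose passing from $L^{10p/((p+4)(p-2))}_{t,x}$ to $L^\infty_{t,x}$ via spatial Bernstein, since $P_M$ localizes the spatial Fourier support. But Bernstein for a spatial frequency projection only gains in the spatial integrability index; applied pointwise in $t$ it gives $\|P_M f\|_{L^\infty_{t,x}} \lesssim M^{3/r}\|P_M f\|_{L^\infty_t L^r_x}$, and $L^\infty_t L^r_x$ is \emph{not} controlled by the spacetime norm $L^r_{t,x}$. Nonlinear Bernstein (Proposition \ref{prop:nonlinear_Bernstein}) combined with Lemma \ref{lem:Strichartz_derivatives} produces a genuine bound in $L^{10p/((p+4)(p-2))}_{t,x}$, but there is no route from there to $L^\infty_{t,x}$ without paying substantially more in powers of $N_2$ (one would need an $L^\infty_t$ Strichartz bound on $\nabla h_{\leq N_2}$, which costs far more than the $N_2^{1/2}$ you invoke). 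Your key numerology ($\alpha_0 = (p-2)(12-7p)/(10p)$ and the cancellation $\alpha_0+\alpha_2 = -(p-1)(p-2)/(5p)$) is therefore built on an unavailable estimate. The paper avoids this entirely by never placing $P_NG$ in $L^\infty$: in \eqref{eqn:high_frequency_estimate} the nonlinear factor is kept in $L^{r_4/(p-2)}_{t,x}$ with $r_4 > \tfrac{10}{3}$, and the remaining H\"older exponents $r_0,\ldots,r_3$ are tuned accordingly.

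\textbf{The summation over $M$ is dominated by small $M$, not $M\sim N_0$.} Even granting the $L^\infty$ bound, a \emph{negative} exponent in $M$ means the sum $\sum_{M\lesssim N_0}\|P_MG\|_{L^\infty}$ concentrates at $M\sim 1$, not $M\sim N_0$; the $N_0^{\alpha_0}$ prefactor you extract from the "dominant" scale $M\sim N_0$ is simply not there. The Fourier-support constraint gives only the ceiling $M\lesssim N_0$; for $M\ll N_0$ the nonlinear factor carries no smallness at all, and summability must come from the fact that $v_{N_0}$ and $u_{N_1}$ almost cancel in frequency when $M$ is small — exactly the Herr--Tataru--Tzvetkov cube decomposition. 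In the paper this decomposition is not a subsidiary device for a "resonant tail": it is the \emph{primary} mechanism in both \eqref{eqn:low_frequency_estimate} (cubes of side $N_2$, all linear Strichartz norms, $G$ in $L^\infty$ without any $M$-gain) and \eqref{eqn:high_frequency_estimate} (cubes of side $N>N_2$, nonlinear Bernstein providing the $N/N_2$ gain). To repair your argument you would essentially have to run the cube decomposition at scale $\max(M,N_2)$ for every piece $P_MG$ and then optimize H\"older exponents at a genuine spacetime integrability level — which is precisely the two-case split the paper carries out.
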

\begin{proof}
    Proposition \ref{prop:comparable_frequencies_reduced_p_2_to_3} follows from the following two estimates:
    \begin{align*}\label{eqn:low_frequency_estimate}
        \sum_{N_0 \sim N_1 \geq N_2 \geq N_3} &\bigg| \int_0^T\int_{\bb{T}^3} v_{N_0}u_{N_1}u_{N_2}w_{N_3}P_{\leq N_2}(G(h_{\leq N_2}))~dxdt \bigg|\\
           &\lesssim \|v\|_{Y^{-s_c}}\|u\|_{Y^{s_c}}^2\|w\|_{Y^{s_c}}\|h\|_{Y^{s_c}}^{p-2}, \numberthis
    \end{align*}
    \begin{align*}\label{eqn:high_frequency_estimate}
        \sum_{N_0 \sim N_1 \geq N_2 \geq N_3} \sum_{N > N_2} &\bigg| \int_0^T\int_{\bb{T}^3} v_{N_0}u_{N_1}u_{N_2}w_{N_3}P_N(G(h_{\leq N_2}))~dxdt \bigg|\\
           &\lesssim \|v\|_{Y^{-s_c}}\|u\|_{Y^{s_c}}^2\|w\|_{Y^{s_c}}\|h\|_{Y^{s_c}}^{p-2}. \numberthis
    \end{align*}
    \tbf{Proof of \eqref{eqn:low_frequency_estimate}:}
    For a given $N_2$, let $\bb{Z}^3 = \bigcup_j C_j$ be a partition of frequency space into cubes $C_j$ of side length $N_2$.
    We write $C_j\sim C_k$ if the sum set $C_j + C_k$ intersects the Fourier support of $P_{\leq 3N_2}$.
    For a given $C_j$, there are finitely many $C_k$ with $C_j\sim C_k$, and the number of such $C_k$ is uniformly bounded independently of $N_2$.

    To prove \eqref{eqn:low_frequency_estimate} it then suffices to prove
    \begin{align*}
        \sum_{N_0 \sim N_1 \geq N_2 \geq N_3} \sum_{C_j\sim C_k} &\bigg| \int_0^T\int_{\bb{T}^3} (P_{C_j}v_{N_0})(P_{C_k}u_{N_1})u_{N_2}w_{N_3}P_{\leq N_2}(G(h_{\leq N_2}))~dxdt \bigg|\\
           &\lesssim \|v\|_{Y^{-s_c}}\|u\|_{Y^{s_c}}^2\|w\|_{Y^{s_c}}\|h\|_{Y^{s_c}}^{p-2}.
    \end{align*}
    First let us proceed formally.
    Note that $|G(z)| \lesssim |z|^{p-2}$.
    By H\"older and Strichartz, we have
    \begin{align*}
        &\bigg| \int_0^T\int_{\bb{T}^3} (P_{C_j}v_{N_0})(P_{C_k}u_{N_1})u_{N_2}w_{N_3}P_{\leq N_2}(G(h_{\leq N_2}))~dxdt \bigg|\\
            &\leq \|P_{C_j}v_{N_0}\|_{L_{t,x}^{r_0}}\|P_{C_k}u_{N_1}\|_{L_{t,x}^{r_0}}\|u_{N_2}\|_{L_{t,x}^{r_0}}\|w_{N_3}\|_{L_{t,x}^{r_1}}\|P_{\leq N_2}(G(h_{\leq N_2}))\|_{L_{t,x}^\infty}\\
            &\lesssim \left(\frac{N_0}{N_1}\right)^{s_c}\left(\frac{N_3}{N_2}\right)^{\frac{3}{2}-\frac{5}{r_1}-s_c}\|P_{C_j}v_{N_0}\|_{Y^{-s_c}}\|P_{C_k}u_{N_1}\|_{Y^{s_c}}\|u_{N_2}\|_{Y^{s_c}}\|w_{N_3}\|_{Y^{s_c}}\|h_{\leq N_2}\|_{Y^{s_c}}^{p-2},
    \end{align*}
    provided that $r_j$ are H\"older exponents with $r_j>\frac{10}{3}$, $j=0,\ldots,4$.
    The lowest frequency is summable if $r_1 > \frac{5p}{2}$.
    We take $r_0 = \frac{15p}{5p-2(1-\ep)}$ and $r_1 = \frac{5p}{2(1-\ep)}$.
    Then for $\ep = \ep(p)$ sufficiently small, we have $r_0,r_1 > \frac{10}{3}$.
    Summing using Cauchy-Schwarz, \eqref{eqn:low_frequency_estimate} follows.
    \\
    \tbf{Proof of \eqref{eqn:high_frequency_estimate}:}
    As before, let $\bb{Z}^3 = \bigcup_j C_j$ be a partition of frequency space into cubes, except that the $C_j$ now have side length $N$ and $C_j\sim C_k$ if $C_j+C_k$ intersects the Fourier support of $P_{\leq 3N}$.
    Then like the previous proof, \eqref{eqn:high_frequency_estimate} follows from
    \begin{align*}
        \sum_{N_0 \sim N_1 \geq N_2 \geq N_3}\sum_{N > N_2} \sum_{C_j\sim C_k} &\bigg| \int_0^T\int_{\bb{T}^3} (P_{C_j}v_{N_0})(P_{C_k}u_{N_1})u_{N_2}w_{N_3}P_N(|h_{\leq N_2}|^{p-2})~dxdt \bigg|\\
           &\lesssim \|v\|_{Y^{-s_c}}\|u\|_{Y^{s_c}}^2\|w\|_{Y^{s_c}}\|h\|_{Y^{s_c}}^{p-2}.
    \end{align*}
    The new ingredient relative to the preceding is the following estimate: for $2<p<3$ and $r > \frac{10}{3}$,
    \begin{equation}\label{eqn:Bernstein_high_frequencies}
        \|P_N (G(h_{\leq N_2}))\|_{L_{t,x}^{r/(p-2)}}
            \lesssim N^{-(p-2)}N_2^{(\frac{5}{2}-\frac{5}{r_4}-s_c)(p-2)}\|u_{\leq N_2}\|_{Y^{s_c}}^{p-2}.
    \end{equation}
    This estimate follows from Lemma \ref{prop:nonlinear_Bernstein} and the square-function estimate.
    Choosing $r_0 = r_1 = \frac{20p}{(1-\ep)p^2 + (1+5\ep)p + 4\ep}$, $r_2 = \frac{10p}{2p^2 - 4 - 3(1-\frac{\ep}{3})p(p-2)}$, $r_3 = \frac{5p}{2(1-\ep)}$, and $r_4 = \frac{10}{3(1-\ep)}$, and taking $\ep = \ep(p)>0$ small,
    we find that $r_j > \frac{10}{3}$ for $j=0,1,2,3,4$.
    Proceeding as above, by H\"older, Bernstein, and \eqref{eqn:Bernstein_high_frequencies} we obtain
    \begin{align*}
        &\bigg| \int_0^T\int_{\bb{T}^3} (P_{C_j}v_{N_0})(P_{C_k}u_{N_1})u_{N_2}w_{N_3}P_N(G(h_{\leq N_2}))~dxdt \bigg|\\
        &\lesssim \|P_{C_j}v_{N_0}\|_{L_{t,x}^{r_0}}\|P_{C_k}u_{N_1}\|_{L_{t,x}^{r_1}}\|u_{N_2}\|_{L_{t,x}^{r_2}}\|w_{N_3}\|_{L_{t,x}^{r_3}}\|P_N(G(h_{\leq N_2}))\|_{L_{t,x}^{r_4/(p-2)}}\\
        &\lesssim \left(\frac{N_0}{N_1}\right)^{s_c}N^{5(\frac{1}{r_2} + \frac{1}{r_3} + \frac{p-2}{r_4})-p}N_2^{-5(\frac{1}{r_2} + \frac{p-2}{r_4})+p-\frac{2}{p}} N_3^{\frac{2}{p}-\frac{5}{r_3}}\\
        &\hspace{4em}\cdot \|P_{C_j}v_{N_0}\|_{Y^{-s_c}}\|P_{C_k}u_{N_1}\|_{Y^{s_c}}\|u_{N_2}\|_{Y^{s_c}}\|w_{N_3}\|_{Y^{s_c}}\|h_{\leq N_2}\|_{Y^{s_c}}^{p-2}.
    \end{align*}
    Our choices of $r_j$ also ensure that this is summable over $N_0\sim N_1\gtrsim N_2\gtrsim N_3$, $N>N_2$.
    Performing the summation establishes \eqref{eqn:high_frequency_estimate}.
\end{proof}

\section*{Acknowledgments} The author thanks his advisors Rowan Killip and Monica Vi\c san for many helpful discussions and guidance.

%fix later
%\bibliographystyle{AIMS}
%\bibliography{ms}

\end{document}